\documentclass[11pt]{article}
\usepackage{latexsym,amssymb,amsmath,amsfonts,amsthm}
\usepackage{graphics,graphicx,subfigure}
\usepackage{mathrsfs,cases,cite}
\newtheorem{theorem}{Theorem}[section]
\newtheorem{lemma}[theorem]{Lemma}
\newtheorem{corollary}[theorem]{Corollary}

\newtheorem{remark}[theorem]{Remark}

\newcommand{\mat}{\mathbb}
\newcommand{\R}{{\mat R}}

\newcommand{\C}{{\mat C}}

\newcommand{\ds}{\displaystyle}
\newcommand{\no}{\nonumber}
\newcommand{\Om}{\Omega}
\newcommand{\om}{\omega}
\newcommand{\ba}{\backslash}
\newcommand{\pa}{\partial}
\newcommand{\al}{\alpha}
\newcommand{\g}{\gamma}

\newcommand{\la}{\lambda}
\newcommand{\La}{\Lambda}

\newcommand{\ov}{\overline}
\newcommand{\na}{\nabla}
\newcommand{\de}{\delta}
\newcommand{\De}{\Delta}
\newcommand{\sig}{\sigma}
\newcommand{\wid}{\widetilde}
\newcommand{\bs}{\boldsymbol}
\newcommand{\PML}{{\rm PML}}
\providecommand{\Div}{\operatorname{div}}
\providecommand{\Rp}{\operatorname{Re}}
\providecommand{\Ip}{\operatorname{Im}}
\providecommand{\Trace}{\operatorname{tr}}
\providecommand{\Diag}{\operatorname{diag}}

\begin{document}
\renewcommand{\theequation}{\arabic{section}.\arabic{equation}}
	
\title{\bf Convergence of the PML method for thermoelastic wave scattering problems}
\author{Qianyuan Yin\thanks{Academy of Mathematics and Systems Science, Chinese Academy of Sciences,
Beijing 100190, China and School of Mathematical Sciences, University of Chinese Academy of Sciences,
Beijing 100049, China (yinqianyuan@amss.ac.cn; co-first author).}
\and
Changkun Wei\thanks{School of Mathematics and Statistics, Beijing Jiaotong University,
			Beijing 100044, China (ckwei@bjtu.edu.cn; co-first author, corresponding author).}
\and
Bo Zhang\thanks{SKLMS and Academy of Mathematics and Systems Science, Chinese Academy of Sciences,
Beijing 100190, China and School of Mathematical Sciences, University of Chinese Academy of Sciences,
Beijing 100049, China (b.zhang@amt.ac.cn).}
}
\date{}
	
\maketitle

\begin{abstract}
This paper is concerned with the thermoelastic obstacle scattering problem in three dimensions.
A uniaxial perfectly matched layer (PML) method is firstly introduced to truncate the unbounded scattering
problem, leading to a truncated PML problem in a bounded domain. Under certain constraints on model parameters,
the well-posedness for the truncated PML problem is then proved except possibly for a discrete set of frequencies,
based on the analytic Fredholm theory. Moreover, the exponential convergence of the uniaxial PML method is 
established in terms of the thickness and absorbing parameters of PML layer. The proof is based on the PML 
extension technique and the exponential decay properties of the modified fundamental solution.
As far as we know, this is the first convergence result of the PML method for the time-harmonic
thermoelastic scattering problem.

\vspace{.2in}

{\bf Keywords:} Thermoelastic wave equations, uniaxial PML, well-posedness, exponential convergence
\end{abstract}

\section{Introduction}
\setcounter{equation}{0}

In this paper, we study the thermoelastic obstacle scattering problem with the Dirichlet boundary condition.
In the pure elastic theory of the geophysical applications, the effect of temperature on the medium is often 
ignored and only the mechanical properties of elastic solids is considered. However, to more accurately model 
real-world scenarios such as subsurface exploration, the thermal effects must be taken into account when 
interpreting geophysical data. For example, the hot dry rock geothermal model can be used to obtain the thermal 
information of media by the thermoelastic wave field. The thermoelastic model reveals a connection between the 
thermal and mechanical motions. We refer to \cite{ASZ2024,BXY2019,Cakoni2000,Kupradze1979,wang2025} for the 
mathematical analysis and numerical simulations for the thermoelastic wave scattering problems.

In practical numerical solutions to wave scattering problems, one usually needs to truncate the unbounded domain
in which the wave propagates into a bounded computational domain. The perfectly matched layer (PML) is an efficient
and effective numerical truncation technique, which was originally proposed by B\'erenger in 1994 for solving the
time-dependent Maxwell's equations \cite{Berenger1994}. The basic idea of the PML method is to construct an artificial
layer surrounding the computational domain to effectively attenuate outgoing waves. In general, the designed medium 
needs to make sure the scattered waves decay rapidly (exponentially) in the absorbing layer of finite thickness 
regardless of the wave incident angles. The PML method has an excellent numerical performance and has been widely used
in various types of wave scattering problems; see \cite{Fathi2015,Lei2023,Pakravan2014,Pakravan2017} for the numerical
solution of forward models in the full waveform inversion problems in geophysics and \cite{Hou2021,Wang2022,Yang2024}
for the numerical simulations in thermoelastic and poroelastic wave scattering problems.

Extensive research works are available on the convergence analysis of PML methods. The convergence of the circular and
spherical PML methods is studied in \cite{BW2005,Bramble2007,Chen2009,CL2005,CZ2017,HSZ2003,Lassas1998,wyz2020}, and
convergence of the uniaxial (Cartesian) PML methods can be found in \cite{BP2012,BP2013,CCZ2013,CW2008,CW2012,CZ2010,
Kim2010,wyz2021} for acoustic and electromagnetic wave scattering problems. For the elastic wave scattering problems, 
the exponential convergence of a spherical PML method was established in \cite{BPT2010}, and the exponential convergence
of a Cartesian PML method was established in \cite{CXZ2016}. In the remarkable work \cite{CXZ2016}, the authors consider
the Cartesian PML problem with the mixed boundary condition at the outer boundary of the truncated domain to further 
extend the reflection argument in the \cite{BP2012,BP2013}. This work provides several important analysis techniques 
and innovation, such as the integral representation formula with PML and the limiting absorption principle for elastic 
wave scattering solution. For the elastic scattering problems in periodic structures, the exponential convergence for 
the proposed PML method was established in \cite{Jiang2017,Jiang2018,Zhu2024} for both one-dimensional and two-dimensional
 cases. To the best of our knowledge, there is no convergence result on the PML method for the thermoelastic wave 
 scattering problems, especially for three-dimensional case.

The purpose of this paper is to study the convergence of the uniaxial PML method for the three-dimensional 
time-harmonic thermoelastic wave scattering problems. We adopt the complex coordinate stretching in the 
following form \cite{CXZ2016,Chew1994}
\begin{equation*}
	\wid{x}_j(x_j)=x_j+z\int_0^{x_j} \al_j(s)ds,\;z=\zeta+{\bf i},\;j=1,2,3,
\end{equation*}
where $\zeta\geq 0$ is a constant to be specified, ${\bf i}$ denotes the imaginary unit and $\al_j(s)$ is the PML medium
property. Motivated by the work
\cite{CXZ2016}, the parameter $\zeta$ can be appropriately chosen to ensure the ellipticity of the elastic part of the
thermoelastic PML operator. We remark that the PML analysis for the thermoelastic wave scattering problem is more 
challenging compared with the purely elastic problem, due to the coexistence of fast and slow compressional waves and
shear waves, with both complex and real wavenumbers. More restrictions on the PML parameters need to be imposed to 
obtain the well-posedness and exponential convergence for the PML method. Besides, the method for the well-posedness 
analysis of elastic PML problems in \cite{CXZ2016} cannot be directly applied to the thermoelastic problem, since the
symmetry in the thermoelastic PML variational formulation is lacking. Instead, we employ analytic Fredholm theory and
carefully handle parameter dependencies, together with appropriate choices of frequency to prove the well-posedness 
of the PML problem under certain conditions. The exponential convergence of the PML method is finally established in
terms of the thickness and parameters of the PML layer, based on the PML extension and the exponential decay properties
of the modified fundamental solution.

The remaining part of the paper is organized as follows. In section 2, we present the mathematical model for the 
time-harmonic thermoelastic scattering problem and recall the results on the existence and uniqueness of solutions. 
Section 3 is devoted to propose the PML method for the thermoelastic wave equation and estimate the thermoelastic 
fundamental solution matrix elements with PML extension. In section 4, the well-posedness for the thermoelastic PML 
equation in the truncated domain and layer are carefully studied and analyzed. In section 5, we give a hard analysis 
of the $H^{1/2}$ norm of the PML extension potential on the outer boundary of the layer and further show the error estimate
between the Dirichlet-to-Neumann (DtN) operators of the original scattering problem and the truncated PML problem. 
The exponential convergence of the PML method is finally established in terms of the thickness and parameters of the PML layer.

\section{Thermoelastic scattering problem}\label{section_model}
\setcounter{equation}{0}

In this section, we give a brief mathematical description of the thermoelastic scattering model and recall the 
results on the existence and uniqueness of solutions.

We begin with an outline of the geometric configuration of the model problem. Let $\Om\subseteq\R^3$ be a bounded domain 
with smooth boundary $\pa\Om$. Assume that $\R^3\ba\ov{\Om}$ is filled with a homogeneous and an isotropic elastic medium,
characterized by the constant mass density $\rho$ and Lam\'{e} parameters $\la$, $\mu$ satisfying that $\mu>0$ and 
$3\la+2\mu>0$, and the thermal coupling constants $\g$ and $\eta$ given by
\begin{equation*}
	\g = (3\la+2\mu)\al_T,\quad \eta=\frac{T_0\g}{\la_0},
\end{equation*}
where $\al_T$ is the coefficient of volumetric thermal expansion, $T_0$ represents a reference temperature, and $\la_0$ 
denotes the thermal conductivity coefficient.

The thermoelastic scattering problem with homogeneous Dirichlet boundary condition can be then modeled by the following 
time-harmonic Biot's system of linearized thermoelasticity \cite{Kupradze1979}
\begin{subequations}
	\begin{align}\label{thermoelastic1}
		\De^*\bs{u}+\rho \om^2 \bs{u}-\g \na p={\bf Q} \quad &{\rm in}\;\;\R^3\ba\ov{\Om},\\\label{thermoelastic2}
		\De p+qp+{\bf{i}}\om \eta \na \cdot \bs{u}=0 \quad &{\rm in}\;\;\R^3\ba\ov{\Om},\\\label{DBC}
		(\bs{u}^{\top},p)^{\top}=0 \quad &{\rm on}\;\; \pa \Om,
	\end{align}
\end{subequations}
where $\bs{u}=(u_1,u_2,u_3)^{\top}$ is the elastic displacement vector, $p$ is the temperature field, 
$q={\bf{i}}\om/\kappa$ with $\kappa> 0$ representing the thermal diffusivity, and the Lam\'{e} operator $\De^*$ defined by
\begin{equation*}
	\De^*\bs{u}:=\mu \De \bs{u}+(\la +\mu)\na \na \cdot \bs{u} =\na \cdot \sig(\bs{u}).
\end{equation*}
Here, the symmetric stress tensor $\sig(\bs{u})$ is given by the generalized Hooke's law
\begin{equation*}
	\sig(\bs{u})=(\la \na \cdot \bs{u} )\mat{I}+2\mu \epsilon(\bs{u}), \quad \epsilon(\bs{u})=\frac{1}{2}
	(\na \bs{u}+(\na \bs{u})^{\top}),
\end{equation*}
where $\mat{I}$ is the identity matrix and $\na\bs{u}$ denotes the displacement gradient tensor:
\begin{equation*}
	\na\bs{u}=\left[
	\begin{matrix}
		 \pa_{x_1}u_1 & \pa_{x_2}u_1 & \pa_{x_3} u_1 \\
		 \pa_{x_1}u_2 & \pa_{x_2}u_2 & \pa_{x_3} u_2 \\
		 \pa_{x_1}u_3 & \pa_{x_2}u_3 & \pa_{x_3} u_3
	\end{matrix}\right].
\end{equation*}
Throughout the paper, we always assume that the source ${\bf Q}$ has compact support inside 
$B_1=\{(x_1,x_2,x_3)^{\top} \in \R^3:\lvert x_j \rvert <l_j,j=1,2,3\}$ with some constants 
$l_j>0,j=1,2,3$. The matrix form of the problem \eqref{thermoelastic1}-\eqref{DBC} can be 
alternatively obtained
\begin{equation}\label{matrix_eq}
	\begin{aligned}
		LU=({\bf Q}^{\top},0)^{\top}, \quad &{\rm in}\;\;\R^3\ba\ov{\Om},\\
		U=(\bs{u}^{\top},p)^{\top}=0, \quad &{\rm on}\;\; \pa \Om,
	\end{aligned}
\end{equation}
where
\begin{equation*}
	L=\begin{bmatrix}
	(\De^* +\rho \om^2)\mat{I} & -\g \na \\
	q \eta \kappa \na \cdot &  \De +q
\end{bmatrix}.
\end{equation*}
For a smooth solution $U$ of the homogeneous thermoelastic system in $\R^3\ba\ov{B}_1$, using 
Helmholtz decomposition we have $\bs{u}=\na \phi +\bs{\psi}$ with $\Div\bs{\psi}=0$. 
By substituting it into homogeneous thermoelastic system \eqref{matrix_eq}, we can obtain equations
\begin{subequations}
\begin{align}
	(\la+2\mu)\De \phi +\rho \om^2 \phi -\g p&=0,\label{HeD1}\\
	-\mu \na \times \na \times \bs{\psi} +\rho \om^2 \bs{\psi} &=0, \label{HeD2}\\
	\De p + qp +{\bf{i}}\om \eta \De \phi &=0.\label{HeD3}
\end{align}
\end{subequations}
Substituting the equation \eqref{HeD1} into the equation \eqref{HeD3} yields
\begin{equation}\label{HeD4}
	\De^2 \phi +\left(\frac{\rho\om^2}{\la+2\mu }+\frac{{\bf{i}}\om\eta \g}{\la+2\mu }+q\right) \De \phi
	+\frac{qp\om^2}{\la+2\mu }\phi=0
\end{equation}
Rewrite the equation \eqref{HeD4} in the following form
\begin{equation}\label{HeD5}
	(\De+\la_1^2) (\De+\la_2^2)\phi=0,
\end{equation}
where
\begin{equation}\label{chara}
	\la_1^2+\la_2^2=\frac{\rho\om^2}{\la+2\mu }+\frac{{\bf{i}}\om\eta \g}{\la+2\mu }+q,
	\quad\la_1^2\la_2^2=\frac{qp\om^2}{\la+2\mu }.
\end{equation}
Noting that $k_p=\ds\sqrt{\frac{\rho\om}{\la +2\mu}}$ is the wave number of compressional waves, 
we reformulate equations \eqref{chara} into the following characteristic relation equations
\begin{equation}\label{chara1}
	 \la_1^2+\la_2^2=\frac{{\bf{i}}\om}{\kappa}+\frac{{\bf{i}}\om\g \eta}{\la+2\mu}+k_p^2,
	 \quad\la_1^2\la_2^2=\frac{{\bf{i}}\om}{\kappa}k_p^2.
\end{equation}
The thermoelastic wave field obeys the following decomposition lemma \cite{BXY2019}.
\begin{lemma}\label{lem_decom}
The solution $U$ of homogeneous thermoelastic system in $\R^3\ba\ov{B}_1$
can be written in the form
\begin{equation*}
	\bs{u}=\bs{u}^1+\bs{u}^2+\bs{u}^3,\quad p=p^1+p^2,
\end{equation*}
where $\bs{u}^1,\bs{u}^2,\bs{u}^3$ and $p^1,p^2$ satisfy
\begin{equation*}
	\begin{aligned}
		(\De +\la_1^2)\bs{u}^1&=0 ,\quad & (\De +\la_2^2)\bs{u}^2&=0,\quad& (\De +\la_3^2)\bs{u}^3&=0, \\
		\na \times \bs{u}^1&=0,\quad & \quad \na \times \bs{u}^2&=0,\quad & \na\cdot \bs{u}^3&=0, \\
		(\De +\la_1^2)p^1&=0 ,\quad & (\De +\la_2^2)p^2&=0.
	\end{aligned}
\end{equation*}
Here, the wave number $\la_1,\la_2$ are the roots of the characteristic relation equations 
\eqref{chara1}, and $\la_3=\ds\sqrt{\frac{\rho}{\mu}}\om$ is the the wave number of shear waves.
\end{lemma}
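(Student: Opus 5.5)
Write $k_s:=\la_3$, so that $\mu k_s^2=\rho\om^2$, and assume $\la_1^2\neq\la_2^2$ (for generic parameters the roots of \eqref{chara1} are distinct). The idea is to use the Helmholtz decomposition, and then factorise the fourth-order operator in \eqref{HeD5} by partial fractions. Write $\bs{u}=\na\phi+\bs{\psi}$ with $\Div\bs{\psi}=0$, so that \eqref{HeD1}--\eqref{HeD3} hold. Since $\Div\bs\psi=0$, we have $\na\times\na\times\bs\psi=-\De\bs\psi$, and \eqref{HeD2} becomes $(\De+\la_3^2)\bs\psi=0$. We set $\bs u^3:=\bs\psi$; it is divergence free and solves the Helmholtz equation with wave number $\la_3$.

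For the compressional part, $\phi$ satisfies $(\De+\la_1^2)(\De+\la_2^2)\phi=0$ by \eqref{HeD5}. We define
\[
\phi_1:=\frac{(\De+\la_2^2)\phi}{\la_2^2-\la_1^2},\qquad \phi_2:=\frac{(\De+\la_1^2)\phi}{\la_1^2-\la_2^2}.
\]
Then $\phi_1+\phi_2=\phi$. Since the constant-coefficient operators commute, $(\De+\la_j^2)\phi_j=0$ for $j=1,2$. We set $\bs u^j:=\na\phi_j$. Each $\bs u^j$ is curl free, and $(\De+\la_j^2)\bs u^j=\na(\De+\la_j^2)\phi_j=0$.

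For the temperature, \eqref{HeD1} gives $p=\g^{-1}\big((\la+2\mu)\De\phi+\rho\om^2\phi\big)$. This expresses $p$ as a constant-coefficient operator applied to $\phi=\phi_1+\phi_2$. Using $\De\phi_j=-\la_j^2\phi_j$, we get
\[
p=p^1+p^2,\qquad p^j:=\g^{-1}\big(\rho\om^2-(\la+2\mu)\la_j^2\big)\phi_j .
\]
Consequently $(\De+\la_j^2)p^j=0$, and the decomposition is complete.

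The step needing the most care is the justification of the Helmholtz decomposition in the exterior domain $\R^3\ba\ov{B}_1$ with smooth fields. There I would proceed constructively, avoiding abstract Hodge theory. Applying $\Div$ to the first equation of $LU=0$ gives $(\la+2\mu)\De(\Div\bs u)+\rho\om^2\Div\bs u-\g\De p=0$. Coupling this with the $p$-equation, and eliminating $p$, shows that $\Div\bs u$ satisfies $(\De+\la_1^2)(\De+\la_2^2)\Div\bs u=0$.

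Similarly, applying $\operatorname{curl}$ gives $(\De+\la_3^2)\,\na\times\bs u=0$. Using these, one can define the pieces directly:
\[
\bs u^3:=-\la_3^{-2}\,\na\times\na\times\bs u,\qquad \bs u^1+\bs u^2:=\bs u-\bs u^3 .
\]
The first is divergence free. For the second, $\bs u-\bs u^3$ is curl free because $\na\times\na\times\na\times\bs u=-\De\,\na\times\bs u=\la_3^2\,\na\times\bs u$. One then splits $\bs u-\bs u^3$ by the same partial-fraction projectors $(\De+\la_{2,1}^2)/(\la_{2,1}^2-\la_{1,2}^2)$ and does the same for $p$. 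A short computation with $\De^*\bs u=(\la+2\mu)\na\Div\bs u-\mu\na\times\na\times\bs u$ verifies all the listed equations.

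This operator-based route avoids needing a global potential $\phi$ on the non-simply-connected-at-infinity exterior region. It requires only $\la_3\neq0$ and $\la_1^2\neq\la_2^2$.
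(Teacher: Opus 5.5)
Your main argument is correct and follows the route the paper sketches before deferring to \cite{BXY2019}: the Helmholtz decomposition, the factorisation \eqref{HeD5}, the partial-fraction split $\phi=\phi_1+\phi_2$ with $\bs u^j=\na\phi_j$, and $p^j=\g^{-1}(\rho\om^2-(\la+2\mu)\la_j^2)\phi_j$ read off from \eqref{HeD1}. You make explicit the splitting step the paper leaves implicit.

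The constructive justification in your last part, however, has a sign error that makes one of your claims false as written. Take $\bs u^3:=-\la_3^{-2}\na\times\na\times\bs u$ together with your own (correct) identity $\na\times\na\times\na\times\bs u=\la_3^2\,\na\times\bs u$. Then $\na\times\bs u^3=-\na\times\bs u$, so $\na\times(\bs u-\bs u^3)=2\,\na\times\bs u$, which is not zero in general. The shear part must be $\bs u^3:=\la_3^{-2}\,\na\times\na\times\bs u$. Indeed, if $\bs u=\na\phi+\bs\psi$ with $\Div\bs\psi=0$ and $(\De+\la_3^2)\bs\psi=0$, then $\na\times\na\times\bs u=-\De\bs\psi=\la_3^2\bs\psi$.

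With the sign fixed, the ``short computation'' is as follows, and this is where the equation itself, not only its divergence and curl, has to enter. Insert $\De^*\bs u=(\la+2\mu)\na\Div\bs u-\mu\na\times\na\times\bs u$ into the first equation of $LU=0$ and divide by $\rho\om^2=\mu\la_3^2$:
\[
\bs u-\la_3^{-2}\,\na\times\na\times\bs u=\na\phi,\qquad \phi:=\frac{\g p-(\la+2\mu)\Div\bs u}{\rho\om^2}.
\]
From the divergence and curl equations alone you would only learn that $(\De+\la_1^2)(\De+\la_2^2)(\bs u-\bs u^3)$ is curl- and divergence-free, i.e.\ harmonic, not that it vanishes. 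Since $\De\phi=\Div\bs u$, this $\phi$ satisfies \eqref{HeD1} exactly, and \eqref{HeD3} and \eqref{HeD5} follow. So the corrected constructive route is just your first route with an explicit potential.

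That explicit potential is worth having, because it settles a point both you and the paper pass over. For an arbitrary decomposition $\bs u=\na\phi+\bs\psi$, the first equation only yields $\na g+(\mu\De+\rho\om^2)\bs\psi=0$ with $g=(\la+2\mu)\De\phi+\rho\om^2\phi-\g p$ harmonic. Hence \eqref{HeD1}--\eqref{HeD2} hold only after the renormalisation $\phi\mapsto\phi-g/(\rho\om^2)$, $\bs\psi\mapsto\bs\psi+\na g/(\rho\om^2)$.

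Your stated motivation for the constructive route is off. $\R^3\ba\ov{B}_1$ is simply connected, so curl-free fields there are gradients. The genuine subtlety is this normalisation, not the topology of the exterior domain.
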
	
In addition, the scattered wave field $U$ is assumed to satisfy the Kupradze radiation conditions 
as $r=\sqrt{x_1^2+x_2^2+x_3^2}\rightarrow \infty$ for $i=1,2,3$ and $j=1,2$ (cf. \cite{Kupradze1979}):
\begin{equation}\label{Kupradze}
\begin{aligned}
	\bs{u}^j=o(r^{-1}) ,\quad & \pa_{x_i}\bs{u}^j=O(r^{-2}), \\
	p^j=o(r^{-1}) ,\quad & \pa_{x_i}p^j=O(r^{-2}), \\
	\bs{u}^3=o(r^{-1}) ,\quad & r(\pa_{r}\bs{u}^3-i\la_3\bs{u}^3)=O(r^{-1}).
\end{aligned}
\end{equation}
For given $\bs{f} \in H^{\frac{1}{2}}(\pa B_1)^4$, the exterior Dirichlet problem
\begin{equation}\label{eDp1}
	\begin{cases}
		\De^*\bs{u}+\rho \om^2 \bs{u}-\g \na p=0 \quad {\rm in}\;\;\R^3\ba\ov{B}_1,\\
		\De p+qp+{\bf{i}}\om \eta \na \cdot \bs{u}=0 \quad {\rm in}\;\;\R^3\ba \ov{B}_1,\\
		(\bs{u}^{\top},p)^{\top}=\bs{f} \quad {\rm on} \;\;\pa B_1,\\
		(\bs{u}^{\top},p)^{\top}\;\text{satisfies the Kupradze radiation conditions \eqref{Kupradze} at infinity},
	\end{cases}
\end{equation}
admits a unique weak solution $U=(\bs{u}^{\top},p)^{\top}$ in the space $H_{loc}^1(\R^3\ba\ov{\Om})^4$ 
(cf. \cite{Cakoni2000,Kupradze1979}). Hence, we can define the Dirichlet to Neumann (DtN) map 
$\mathcal{N}:H^{1/2}(\pa B_1)^4\to H^{-1/2}(\pa B_1)^4$ for problem \eqref{eDp1} by 
\begin{equation}\label{dtn}
	\mathcal{N}\bs{f}=\mathcal{R}(\bs{u}^{\top},p^{\top})|_{\pa B_1}, {\rm where}\;\mathcal{R}(\pa,\nu)U=\begin{bmatrix}
		T(\pa,\nu) & -\g \nu \\
		0  &    \pa_{\nu}	
	\end{bmatrix}U\quad{\rm on}\;\pa B_1.
\end{equation}
Here, the traction operator $T(\pa,\nu)\bs{u}=\sig(\bs{u})\nu$, and $\nu=(\nu_1,\nu_2,\nu_3)^{\top}$
denotes the unit outward normal vector of the boundary $\pa B_1$. The well-posedness of \eqref{eDp1} 
implies that $\mathcal{N}$ is a continuous linear operator.

According to \cite{Kupradze1979}, the $4\times 4$ matrix-valued fundamental solution
$\Phi(x,\om)=(\Phi_{ij}(x,\om))$ is defined by
\begin{equation*}
	\begin{aligned}
		\Phi_{ij}(x,\om)=&\sum_{l=1}^3\Bigg[(1-\de_{i4})(1-\de_{j4})\Big(\frac{\de_{ij}}{2\pi \mu}\de_{3l}
		-\al_{l}\frac{\pa^2}{\pa x_{i}\pa{x_j}}\Big)\\
		&+i\beta_l\om \eta \de_{i4}(1-\de_{j4})\frac{\pa}{\pa{x_j}}
		-\beta_l\g \de_{j4}(1-\de_{i4})\frac{\pa}{\pa{x_i}}
		+\de_{i4}\de_{j4}\g_{l}\Bigg]\frac{\exp({\bf i}\la_{l}\lvert x \rvert)}{\lvert x \rvert},
	\end{aligned}
\end{equation*}
with the parameters
\begin{equation*}
	\begin{cases}
		\ds \al_l =\frac{(-1)^l(1-{\bf{i}}\om\kappa^{-1}\la_l^{-2})(\de_{1l}+\de_{2l})}{2\pi(\la+2\mu)
		(\la_2^2-\la_1^2)}-\frac{\de_{3l}}{2\pi \rho \om^2},\\
		\ds \beta_l =\frac{(-1)^l(\la_l^2-k_p^2)(\de_{1l}+\de_{2l})}{2\pi(\la_2^2-\la_1^2)},\\
		\ds \g_l =\frac{(-1)^l(\de_{1l}+\de_{2l})}{2\pi(\la+2\mu)(\la_2^2-\la_1^2)},
	\end{cases}
\end{equation*}
where $\de_{ab}$ denotes the Kronecker symbol of two integer variables $a$ and $b$, and $\la_i,i=1,2,3$
are the wave numbers appeared in Lemma \ref{lem_decom}.

The entries $\Phi_{ij}(x,\om)$ can be written as the simplified form
\begin{equation*}
	\Phi_{ij}(x,\om)=\sum_{l=1}^3\left[C_{1l}f_{\la_l}(\lvert x \rvert)+\frac{C_{2l}x_i+C_{3l}x_j}
	{\lvert x \rvert}f^{'}_{\la_l}(\lvert x \rvert)+C_{4l}\left(\frac{x_ix_j}{\lvert x \rvert^2}
	f^{''}_{\la_l}(\lvert x \rvert)-\frac{x_ix_j}{\lvert x \rvert^3}f^{'}_{\la_l}(\lvert x \rvert)\right)\right],
\end{equation*}
where $f_{\la_l}(r)=\ds e^{{\bf i}\la_{l}r}/r,l=1,2,3$ and $C_{tl},t=1,2,3,4$ are constants.

In what follows, we introduce the thermoelastic single-layer potential for $\bs{p}\in H^{-1/2}(\pa B_1)^4$
\begin{equation*}
	\Psi_{SL}(\bs{p})(x)=\int_{\pa B_1}\Phi(x-y,\om)\bs{p}(y)ds(y),\quad x\in\R^3\ba\ov{B}_1
\end{equation*}
and the double-layer potential for $\bs{q}\in H^{1/2}(\pa B_1)^4$
\begin{equation*}
	\Psi_{DL}(\bs{q})(x)=\int_{\pa B_1}[\mathcal{S}(\pa_y,\nu(y),\om)\Phi^{\top}(x-y,\om)]^{\top}
	\bs{q}(y)ds(y),\quad x\in\R^3\ba\ov{B}_1
\end{equation*}
where the $4\times 4$ matrix differential operator $\mathcal{S}(\pa_y,\nu(y),\om)
=(\mathcal{S}_{ij}(\pa_y,\nu(y),\om))$
is defined by
\begin{equation*}
	\begin{aligned}
		\mathcal{S}_{ij}(\pa_y,\nu(y),\om)=\;&(1-\de_{i4})(1-\de_{j4})\Big(\de_{ij}\mu
		\frac{\pa}{\pa \nu(y)}+\la \nu_i(y)\frac{\pa}{\pa x_j}+\mu \nu_j(y)\frac{\pa}{\pa x_i}\Big)\\
		&+{\bf{i}}\om \eta \de_{j4}(1-\de_{i4})\nu_i(x) + \de_{i4}\de_{j4}\frac{\pa}{\pa{\nu(y)}}.
	\end{aligned}
\end{equation*}
The unique solution of problem \eqref{eDp1} has the integral representation \cite{Kupradze1979}
\begin{equation}
	E(\bs{f})=\frac{1}{2}\Big[\Psi_{SL}(\mathcal{N}\bs{f})-\Psi_{DL}(\bs{f})\Big]\quad {\rm in}\;\R^3\ba\ov{B}_1.
\end{equation}
The well-posedness of the problem \eqref{thermoelastic1}-\eqref{DBC} with Kupradze radiation conditions
\eqref{Kupradze} can be concluded as the following theorem (cf. \cite{Cakoni2000,Kupradze1979}).
\begin{theorem}\label{directscattering}
For given ${\bf Q} \in H^1(\R^3\ba\ov{\Om})^3$ with compact support inside $B_1$, the thermoelastic scattering
problem \eqref{matrix_eq} with Kupradze radiation conditions \eqref{Kupradze} has at most one weak solution
$U=(\bs{u}^{\top},p)^{\top}$ in the space $H_{loc}^1(\R^3\ba\ov{\Om})^4$.
\end{theorem}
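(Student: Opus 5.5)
By linearity, the statement reduces to showing that a weak solution $U=(\bs{u}^{\top},p)^{\top}\in H^1_{loc}(\R^3\ba\ov{\Om})^4$ of the homogeneous problem ($\mathbf Q=0$, $U=0$ on $\pa\Om$) satisfying the Kupradze radiation conditions \eqref{Kupradze} must vanish identically. The plan is the classical Rellich-type argument adapted to thermoelasticity, as in Kupradze and Cakoni. Three steps follow: an energy identity on a large ball, a Rellich argument for the propagating modes, and unique continuation.

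\textbf{Step 1: energy identity.} Take a large ball $B_R\supset\ov{\Om}$ and set $\Om_R=B_R\ba\ov{\Om}$. Test \eqref{thermoelastic1} with $\ov{\bs{u}}$ and \eqref{thermoelastic2} with $\ov{p}$, integrate by parts, and use the Dirichlet condition on $\pa\Om$. This gives
\[
\int_{\pa B_R}T\bs u\cdot\ov{\bs u}\,ds-\int_{\Om_R}\big(\lambda|\na\cdot\bs u|^2+2\mu|\epsilon(\bs u)|^2-\rho\om^2|\bs u|^2\big)dx+\g\int_{\Om_R}p\,\na\cdot\ov{\bs u}\,dx-\g\int_{\pa B_R}p\,\nu\cdot\ov{\bs u}\,ds=0 .
\]
The heat equation gives a similar identity:
\[
\int_{\pa B_R}\pa_\nu p\,\ov p\,ds-\int_{\Om_R}|\na p|^2dx+q\int_{\Om_R}|p|^2dx+{\bf i}\om\eta\int_{\Om_R}\na\cdot\bs u\,\ov p\,dx=0 .
\]
The coupling terms are eliminated by a suitable weighting. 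Multiply the elastic identity by $\ov{{\bf i}\om\eta}$ (equivalently by $-{\bf i}\om\eta$ after conjugation) and the heat identity by $\g$, then combine; a conjugation is needed so that $\g\int p\,\na\cdot\ov{\bs u}$ and $\g\,{\bf i}\om\eta\int\na\cdot\bs u\,\ov p$ cancel after taking real or imaginary parts, using that $\g,\eta$ are real. The volume terms then reduce to a sign-definite quantity: since $\Rp(q)=0$ and $\Ip(q)=\om/\kappa>0$, the dissipative term $\frac{\om}{\kappa}\int_{\Om_R}|p|^2$, together with $\om^{-1}\int|\na p|^2$-type terms from the heat part, appears with a fixed sign. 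The result is a relation of the form
\[
\Ip\Big\{\int_{\pa B_R}(\cdots)\Big\}=c\int_{\Om_R}\big(|\na p|^2+|p|^2\big)dx\ \ \text{or similar}, \qquad c>0 .
\]

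\textbf{Step 2: boundary terms and the Rellich argument.} Decompose $U$ outside $B_1$ via Lemma~\ref{lem_decom}. From \eqref{chara1}, $\la_1,\la_2$ have positive imaginary part, since their squares are non-real. The components $\bs u^1,\bs u^2,p^1,p^2$ therefore satisfy radiating Helmholtz equations with complex wavenumbers and decay exponentially, so their contributions to the $\pa B_R$ integrals vanish as $R\to\infty$. Only the shear part $\bs u^3$ with real $\la_3$ survives. For it, the Sommerfeld-type condition in \eqref{Kupradze} gives $\int_{\pa B_R}T\bs u^3\cdot\ov{\bs u^3}\approx {\bf i}\mu\la_3\int_{\pa B_R}|\bs u^3|^2$. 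Letting $R\to\infty$ in the identity from Step 1 yields $p\equiv0$ in $\R^3\ba\ov\Om$ and $\lim_{R\to\infty}\int_{\pa B_R}|\bs u^3|^2ds=0$. By Rellich's lemma applied to the Helmholtz equation $(\De+\la_3^2)\bs u^3=0$, this gives $\bs u^3=0$ outside a large ball.

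\textbf{Step 3: conclusion.} With $p\equiv0$, the heat equation forces $\na\cdot\bs u=0$. The elastic equation then reduces to $\mu\De\bs u+\rho\om^2\bs u=0$, and $\bs u$ vanishes outside a large ball. Unique continuation for this elliptic system (analyticity of solutions in the connected exterior domain) gives $\bs u\equiv0$ in $\R^3\ba\ov\Om$, so $U=0$.

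The main obstacle is Step 1: choosing the multipliers so that the non-symmetric coupling $-\g\na p$ versus ${\bf i}\om\eta\na\cdot\bs u$ cancels in the imaginary part and leaves a definite sign. I would first try multiplying the heat identity by $\g/(\om\eta)$, or equivalently using the test pair $(\ov{\bs u},\,\g\,\ov{p}/({\bf i}\om\eta)^{*})$, and then check which sign the $q$-term acquires.
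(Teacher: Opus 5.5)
The paper gives no proof of this theorem; it quotes it from Kupradze et al.\ and Cakoni, where the classical argument is exactly your scheme: energy identity on $\Om_R$, exponential decay of the $\la_1,\la_2$ modes, Rellich for the shear mode, unique continuation. The gap is Step~1, which carries the whole argument. You leave it open, and the outcome you predict for it is wrong.

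Write $V_E=\int_{\Om_R}(\la|\na\cdot\bs u|^2+2\mu|\epsilon(\bs u)|^2-\rho\om^2|\bs u|^2)dx$ for the indefinite elastic energy. Suppose you take $\Rp$ of $a\cdot$(elastic identity)$+c\cdot$(heat identity). The coupling cancels only for $c={\bf i}\ov a\g/(\om\eta)$. Then $V_E$ enters with coefficient $-\Rp a$ and the $q$-term with coefficient $-\g\Rp a/(\eta\kappa)$. So the dissipative term $\frac{\om}{\kappa}\int|p|^2$ can never be kept without also keeping $V_E$. Your second suggestion, the test pair with $({\bf i}\om\eta)^*$, is the case $a=1$ and fails. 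Your first, $\g/(\om\eta)$ on the heat identity, works when paired with $a={\bf i}$. The only usable choice is $\Rp a=0$, which gives
\[
\frac{\g}{\om\eta}\int_{\Om_R}|\na p|^2dx=\frac{\g}{\om\eta}\Rp\int_{\pa B_R}\pa_\nu p\,\ov p\,ds-\Ip\int_{\pa B_R}\big(T\bs u\cdot\ov{\bs u}-\g\,p\,\nu\cdot\ov{\bs u}\big)ds .
\]

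You also have not checked the sign, and the sign is what makes uniqueness work. Since $\g/\eta=\la_0/T_0>0$, the left side is nonnegative and nondecreasing in $R$. By your Step~2 (exponential decay of $p$, $\bs u^1$, $\bs u^2$, and $T\bs u^3={\bf i}\mu\la_3\bs u^3+o(r^{-1})$), the right side equals $-\mu\la_3\int_{\pa B_R}|\bs u^3|^2ds+o(1)$. Hence $\int_{\Om_R}|\na p|^2dx=0$ for every $R$, so $p\equiv0$ (constant and decaying), and $\int_{\pa B_R}|\bs u^3|^2ds\to0$. Had the radiation and dissipation terms come with the same sign, nothing would follow. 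The definite volume term is therefore $\int|\na p|^2$, not $\int(|\na p|^2+|p|^2)$.

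With this identity in place, your Steps~2 and~3 go through. In Step~3, add the line showing that $\bs u$ vanishes outside a large ball. There $\bs u=\bs u^1+\bs u^2$ is curl-free and, since $p=0$, also divergence-free, hence harmonic. The elastic equation then reduces to $\rho\om^2\bs u=0$.
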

Introduce the space $H^{1}_{\pa B_1}(\Om_1)^4=\{\bs{v}\in H^1(\Om_1)^4:\bs{v}=0 \;{\rm on} \;\pa \Om\}$ with 
the bounded domain $\Om_1=B_1\ba\ov{\Om}$. We define the sesquilinear form $\mathcal{B}_1:H^{1}_{\pa B_1}(\Om_1)^4
\times H^{1}_{\pa B_1}(\Om_1)^4\to\C $
\begin{equation*}
	\begin{aligned}
		\mathcal{B}_1(\Phi,\Psi)=&\int_{\Om_1}\Big[\sig(\bs{u})\colon \na\ov{\bs{u}}'-\rho \om^2\bs{u}\cdot
		\ov {\bs{u}}'-\g p\na\cdot \ov{\bs{u}}' +\na \ov{p}'\cdot\na p-qp\ov{p}'\\
		&\qquad-{\bf{i}}\om \eta \ov{p}'\na \cdot \bs{u}\Big]dx-\langle\mathcal{N}\Phi,\Psi\rangle_{\pa B_1},
	\end{aligned}
\end{equation*}
where $\Phi=(\bs{u}^{\top},p)^{\top}$, $\Psi=(\bs{u}'^{\top},p')^{\top}$ and 
$\langle\mathcal{N}\Phi,\Psi\rangle_{\pa B_1}=\ds\int_{\pa B_1}\mathcal{N}\Phi\cdot \ov{\Psi} ds$
denotes the dual pair between the spaces $H^{-1/2}(\pa B_1)^4$ and $H^{1/2}(\pa B_1)^4$.

We rewrite the scattering problem \eqref{thermoelastic1}-\eqref{DBC} into the following variational form:
\begin{equation}
	\mathcal{B}_{1}(\Phi,\Psi)=-\int_{\Om_1}{\bf Q}\cdot \ov{\Psi}dx,\quad \forall\;\Psi \in H^{1}_{\pa B_1}(\Om_1)^4.
\end{equation}
Combining the well-posedness of the scattering problem \eqref{thermoelastic1}-\eqref{DBC} and the open mapping 
theorem, we know that there exists a constant $C>0$ such that the inf-sup condition holds:
\begin{equation}\label{infsup_B1}
	\sup_{\Psi\in {H^{1}_{\pa B_1}(\Om_1)^4}\ba\{0\}} \frac{\mathcal{B}_1(\Phi,\Psi)}
	{\Vert \Psi\Vert_{H^{1}(\Om_1)^4}}\geq C\Vert \Phi\Vert_{H^{1}(\Om_1)^4},\quad \forall\;\Phi\in H^{1}(\Om_1)^4.
\end{equation}
\begin{remark}[Constant Convention]
{\rm Here and in the sequel, the symbol $C$ denotes a generic positive constant which is allowed to 
change from one occurrence to the next.}
\end{remark}

\section{The PML method for thermoelastic wave }
\setcounter{equation}{0}
Let $B_2=\{(x_1,x_2,x_3)^{\top} \in \R^3:\lvert x_j \rvert <l_j+d_j,j=1,2,3\}$ be a cuboid domain
surrounding $B_1$. The truncated PML domain and the PML layer are denoted by $\Om_2:=B_2 \ba \ov{\Om}$
and $\Om_{\PML}:=B_2 \ba\ov{B_1}$, respectively.

We introduce even functions $\al_j(t)\in C^2(\R),j=1,2,3$ satisfying $\al_j'(t)\geq 0$ for $t \geq 0$, 
$\al_j(t)=0$ for $\lvert t \rvert \leq l_j$, and $\al_j(t)=\al_0$ for $\lvert t \rvert \geq \ov{l_j}$, 
where $\al_0$ is a positive constant and $l_j<\ov{l_j} \leq l_j+d_j$. In the following text, 
we take $\al_j(t)$ in the specific form for $j=1,2,3$
For example, for $j=1,2,3$
\begin{equation}
	\al_j(t)=
	\begin{cases}
		0, &\lvert t\rvert \leq l_j,\\
		\ds\al_0\frac{h\Big(\frac{|t|-l_j}{\ov{l_j}-l_j}\Big)}{h\Big(\frac{|t|-l_j}{\ov{l_j}-l_j}\Big)
		+h\Big(\frac{\ov{l_j}-|t|}{\ov{l_j}-l_j}\Big)},\quad& l_j\leq \lvert t\rvert \leq \ov{l_j}, \\
		\al_0,& \lvert t\rvert \geq \ov{l_j},
	\end{cases}
\end{equation}
where $h(s)=\left\{
\begin{aligned}
	&e^{-\frac{1}{s}},\;  &s>0,\\
	& 0,&s\leq 0.
\end{aligned}
\right.$

For $x=(x_1,x_2,x_3)^{\top}\in \R^3$, let $z=\zeta+{\bf i}$ with fixed parameter $\zeta>0$, and define the 
PML medium property as $s_j(x_j)=1+z\al_j(x_j),j=1,2,3.$ Now we introduce the complex stretched coordinate
\begin{equation}\label{complexcoo}
	\wid{x}_j(x_j)=x_j+z\int_0^{x_j} \al_j(t)dt=\int_0^{x_j} [1+z\al_j(t)]dt,
\end{equation}
and it follows obviously
\begin{equation*}
	\frac{d\wid{x}_j}{dx_j}=1+z\al(x_j)=s_j(x_j).
\end{equation*}
Denote by $\na F=\Diag\{s_1(x_1),s_2(x_2),s_3(x_3)\}$ the Jacobi matrix of $F=(\wid{x}_1(x_1),\wid{x}_2(x_2),
\wid{x}_3(x_3))^{\top}.$ Direct calculations show that
\begin{equation}\label{dif_operator_pml}
	\begin{aligned}
		& \wid{\na}\cdot =J^{-1}\na \cdot J(\na F)^{-1},\quad J={\rm det}(\na F), \\
		& \wid{\na}=\left(\frac{1}{s_1(x_1)}\frac{\pa}{\pa x_1},\frac{1}{s_2(x_2)}
		\frac{\pa}{\pa x_2},\frac{1}{s_3(x_3)}\frac{\pa}{\pa x_3}\right).
	\end{aligned}
\end{equation}
Define the complex distance
\begin{equation}\label{complexdistance}
	d(\wid{x},\wid{y})=\left[(\wid{x}_1-\wid{y}_1)^2+(\wid{x}_2-\wid{y}_2)^2
	+(\wid{x}_3-\wid{y}_3)^2\right]^{1/2},
\end{equation}
where $\wid{x}=(\wid{x}_1,\wid{x}_2,\wid{x}_3)^{\top}$ and $\wid{y}=(\wid{y}_1,\wid{y}_2,
\wid{y}_3)^{\top}$ are the complex stretched coordinates defined by \eqref{complexcoo}. 
A direct estimate gives (see also \cite[equation (2.16)]{CXZ2016})
\begin{equation}\label{pmld}
	\Ip d(\wid{x},\wid{y})\geq \frac{\ds\sum_{j=1}^3\left(\lvert x_j-y_j\rvert
	\left\lvert\int_{y_j}^{x_j}\al_j(t)dt\right\rvert+\zeta\left\lvert \int_{y_j}^{x_j}\al_j(t)dt
	\right\rvert^2\right)}{(1+\zeta\al_0)\sqrt{(x_1-y_1)^2+(x_2-y_2)^2+(x_3-y_3)^2}}.
\end{equation}
Define the stretched single- and double-layer potentials
\begin{equation*}
	\wid{\Psi}_{SL}(\bs{p})(x)=\int_{\pa B_1}\wid{\Phi}(x-y,\om)\bs{p}(y)ds(y),
	\quad\bs{p}\in H^{-1/2}(\pa B_1)^4
\end{equation*}
and
\begin{equation*}
	\wid{\Psi}_{DL}(\bs{q})(x)=\int_{\pa B_1}[\mathcal{S}(\pa_y,\nu(y),\om)
	\wid{\Phi}^{\top}(x-y,\om)]^{\top}\bs{q}(y)ds(y),\quad\bs{q}\in H^{1/2}(\pa B_1)^4
\end{equation*}
where the thermoelastic PML modified fundamental solution $\wid{\Phi}(x-y,\om)=(\wid{\Phi}_{ij}(x-y,\om))$
is a $4\times 4$ matrix-valued function with abstract form entries for $i,j=1,2,3$
\begin{align}\label{PMLextensionfs}
	\wid{\Phi}_{ij}(x-y,\om)=&\sum\limits_{l=1}^3\Bigg[ c_{1l}f_{\la_l}(d(\wid{x},\wid{y}))
	+c_{2l}\frac{\wid{x}_i-\wid{y}_i}{d(\wid{x},\wid{y})}f^{'}_{\la_l}(d(\wid{x},\wid{y}))
	+c_{3l}\frac{\wid{x}_j-\wid{y}_j}{d(\wid{x},\wid{y})}f^{'}_{\la_l}(d(\wid{x},\wid{y}))\\ \no
	& +c_{4l}\left(\frac{(\wid{x}_i-\wid{y}_i)(\wid{x}_j-\wid{y}_j)}{d(\wid{x},\wid{y})^2}
	f^{''}_{\la_l}(d(\wid{x},\wid{y}))-\frac{(\wid{x}_i-\wid{y}_i)(\wid{x}_j-\wid{y}_j)}
	{d(\wid{x},\wid{y})^3}f^{'}_{\la_l}(d(\wid{x},\wid{y}))\right) \Bigg],
\end{align}
where $c_{kl}$ are constants and $f_{\la_{l}}(z)=e^{{\bf i}\la_lz}/z$ for $k=1,2,3,4$ and $l=1,2,3$.

Let the PML extension of $E(\bs{f})(x)$ be defined as $\wid{E}(\bs{f})(x)=E(\bs{f})(\wid{x})$, given by
\begin{equation}\label{pmlextension}
	\wid{E}(\bs{f})=\frac{1}{2}\Big[\wid{\Psi}_{SL}(\mathcal{N}\bs{f})-\wid{\Psi}_{DL}(\bs{f})\Big],
\end{equation}
which is obviously a solution to the exterior Dirichlet problem
\begin{equation}\label{eq_pml}
	\begin{aligned}
		\wid{\na}\cdot\wid{\sig}(\wid{\bs{u}})+\rho \om^2 \wid{\bs{u}}-\g \wid{\na}\wid{ p}=0 
		& \quad {\rm in} \;\R^3\ba\ov{B}_1 ,\\
		\wid{\na}\cdot\wid{\na}\wid{ p}+q\wid{p}+{\bf{i}}\om \eta \wid{\na}\cdot\wid{ \bs{u}}=0 
		&\quad {\rm in} \;\R^3\ba \ov{B}_1,\\
		(\wid{\bs{u}}^{\top},\wid{\bs{p}})^{\top}=\bs{f} & \quad {\rm on}\; \pa B_1,
	\end{aligned}
\end{equation}
for given $\bs{f} \in H^{\frac{1}{2}}(\pa B_1)^4$. Here
\begin{equation*}
	\wid{\sig}(\wid{\bs{u}})=2\mu \wid{\epsilon }(\wid{\bs{u}})+\la \Trace(\wid{\epsilon }
	(\wid{\bs{u}}))\mat{I},\quad\wid{\epsilon }(\wid{\bs{u}})=\frac{1}{2}(\wid{\na} \wid{\bs{u}}
	+(\wid{\na}\wid{\bs{u}})^{\top})=\frac{1}{2}(\na \wid{\bs{u}}B^{\top}+B(\na \wid{\bs{u}})^{\top}),
\end{equation*}
where $B=(\na F)^{-1}=\Diag\{s_1(x_1)^{-1},s_2(x_2)^{-1},s_3(x_3)^{-1}\}$.

By using the relations \eqref{dif_operator_pml}, the problem \eqref{eq_pml} becomes
\begin{equation}\label{equa1}
	\begin{aligned}
		\na \cdot( \wid{\sig}(\wid{\bs{u}})A)+\rho \om^2 J \wid{\bs{u}}-\g A \na\wid{ p}=0 
		& \quad {\rm in} \;\R^3\ba\ov{B}_1 ,\\
		\na \cdot (K\na\wid{ p})+qJ \wid{p}+{\bf{i}}\om \eta \na \cdot(A\wid{ \bs{u}})=0 
		&\quad {\rm in} \;\R^3\ba \ov{B}_1,\\
		(\wid{\bs{u}}^{\top},\wid{\bs{p}})^{\top}=\bs{f} & \quad {\rm on}\; \pa B_1,
	\end{aligned}
\end{equation}
where the matrices $A$ and $K$ are defined by
\begin{align}\label{matrix_A}
	A&=J(\na F)^{-1} = \Diag\{s_2(x_2)s_3(x_3),s_1(x_1)s_3(x_3),s_1(x_1)s_2(x_2)\}, 
	\\ \label{matrices_K}
	K&= \ds\Diag\Bigg\{\frac{s_2(x_2)s_3(x_3)}{s_1(x_1)},\frac{s_1(x_1)s_3(x_3)}{s_2(x_2)},
	\frac{s_1(x_1)s_2(x_2)}{s_3(x_3)}\Bigg\}.
\end{align}
To obtain the exponential decay property of thermoelastic PML modified fundamental solution,
we introduce the following lemma.
\begin{lemma}\label{f_lambda}
For each $l=1,2,3$, the $n$-th derivative of $f_{\la_{l}}(z)=e^{{\bf i}\la_lz}/z$ can be expressed 
in the form $f_{\la_{l}}^{(n)}(z)=\ds\frac{P(z)e^{{\bf i}\la_lz}}{z^{n+1}}$, where $P(z)$ is a polynomial
of degree $n$. Furthermore, we have the estimate $\ds\lvert f^{(n)}_{\la_l}(z)\rvert\leq 
\frac{C}{\lvert z \rvert^{n+1}}$ on the unit ball $\{z\in \C:\lvert z\rvert <1\}$ in the complex plane.
\end{lemma}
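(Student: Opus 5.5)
The proof is an induction on $n$ for the structural claim, followed by a direct estimate on the unit disc. For $n=0$ we have $f_{\la_l}(z)=e^{{\bf i}\la_l z}/z$, which has the stated form with $P_0(z)\equiv 1$, a polynomial of degree $0$. Suppose $f^{(n)}_{\la_l}(z)=P_n(z)e^{{\bf i}\la_l z}z^{-(n+1)}$ with $\deg P_n=n$. Differentiating by the product rule gives
\begin{equation*}
f^{(n+1)}_{\la_l}(z)=\frac{\big[zP_n'(z)+{\bf i}\la_l zP_n(z)-(n+1)P_n(z)\big]e^{{\bf i}\la_l z}}{z^{n+2}},
\end{equation*}
so we set $P_{n+1}(z):=zP_n'(z)+({\bf i}\la_l z-(n+1))P_n(z)$. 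This is a polynomial whose only possible degree-$(n+1)$ term is ${\bf i}\la_l$ times the leading coefficient of $P_n$. Since $\la_l\neq 0$ (by \eqref{chara1} the products $\la_1^2\la_2^2\neq0$ for $\om>0$, and $\la_3=\sqrt{\rho/\mu}\,\om\neq 0$), the leading coefficient of $P_{n+1}$ is $({\bf i}\la_l)^{n+1}\neq0$, so $\deg P_{n+1}=n+1$. Unrolling the recursion also shows that the coefficients of $P_n$ are explicit polynomials in $\la_l$ depending only on $n$ and $\la_l$.

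For the estimate, take $|z|<1$. The polynomial $P_n$ is bounded on the closed unit disc by the sum of the moduli of its coefficients, which is a constant $C_1=C_1(n,\la_l)$. The exponential factor satisfies $|e^{{\bf i}\la_l z}|=e^{-\Rp(\la_l)\Ip z-\Ip(\la_l)\Rp z}\le e^{|\la_l||z|}\le e^{|\la_l|}$. Combining these gives $|f^{(n)}_{\la_l}(z)|\le C_1e^{|\la_l|}|z|^{-(n+1)}$, which is the claim with $C=C_1e^{|\la_l|}$.

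There is no real obstacle here. The only point needing care is that the constant $C$ depends on $n$ and on $\la_l$ (hence on $\om$ and the material parameters) but not on $z$. This $z$-independence is exactly what is needed later to control the singular part of $\wid{\Phi}$ near the diagonal, where $|d(\wid{x},\wid{y})|$ is small.
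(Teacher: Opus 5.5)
Your proof is correct and follows the same route as the paper. Both arguments prove the representation $f^{(n)}_{\lambda_l}(z)=P(z)e^{{\bf i}\lambda_l z}/z^{n+1}$ by induction via differentiation, then bound $P(z)e^{{\bf i}\lambda_l z}$ uniformly on the unit disc.

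Your version is a little more careful in two places. You verify $\deg P_{n+1}=n+1$ through the leading coefficient $({\bf i}\lambda_l)^{n+1}\neq 0$, using $\lambda_l\neq 0$; the paper does not check the degree. You also replace the paper's appeal to the maximum modulus principle with the elementary bound $|P_n(z)|\,|e^{{\bf i}\lambda_l z}|\le C_1e^{|\lambda_l|}$, which is all that is actually needed.
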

\begin{proof}
We employ the mathematical induction to prove the Lemma.\\	
For $n=1$, it follows that $ f^{'}_{\la_l}(z)=\ds\frac{i\la_l z e^{{\bf i}\la_l z}-e^{{\bf i}\la_l z}}{z^2}$.\\
For $n=k$, we assume $f_{\la_{l}}^{(k)}(z)=\ds\frac{P(z)e^{{\bf i}\la_lz}}{z^{k+1}}$ with ${\rm deg}[P(z)]=k.$
Taking the derivative of $f_{\la_{l}}^{(k)}(z)$, we obtain
\begin{equation*}
	\begin{aligned}
		f_{\la_{l}}^{(k+1)}(z)&=\frac{[P'(z)e^{{\bf i}\la_lz}+P(z)i\la_le^{{\bf i}\la_lz}]z^{k+1}
		-P(z)e^{{\bf i}\la_lz}(k+1)z^k}{(z^{k+1})^2}\\&=\frac{[P'(z)e^{{\bf i}\la_l}
		+P(z)i\la_le^{{\bf i}\la_lz}]z-P(z)e^{{\bf i}\la_lz}(k+1)}{z^{k+2}}.
	\end{aligned}
\end{equation*}
Following the Maximum Modulus Principle	for analytic function $P(z)e^{{\bf i}\la_lz}$, it holds that
$\lvert P(z)e^{{\bf i}\la_lz}\rvert\leq C$ for $\lvert z\rvert<1$. So $\lvert f^{(n)}_{\la_l}(z)\rvert
\leq \ds\frac{C}{\lvert z \rvert^{n+1}}$. This completes the proof.		
\end{proof}
	
For consistency with physical reality, the real and imaginary parts of the complex wave numbers
$\la_1$ and $\la_2$ must be positive \cite{Cakoni1999} (see also \cite{wang2025} for the rigorous
mathematical proof). This fact can be stated as the following Lemma.
\begin{lemma}
If the characteristic roots $\la_1$ and $\la_2$ of the equations \eqref{chara1} 
satisfy $\Rp \la_j>0$ for $j=1,2$, then $\Ip\la_j>0$ for $j=1,2$.
\end{lemma}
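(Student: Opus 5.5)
Write $a=\frac{\om}{\kappa}>0$, $b=\frac{\om\g\eta}{\la+2\mu}$ and $k=k_p^2>0$, so that \eqref{chara1} reads
\[
\la_1^2+\la_2^2=k+{\bf i}(a+b),\qquad \la_1^2\la_2^2={\bf i}ak .
\]
The plan is to determine the location of the squares $\la_j^2$ first, and only afterwards pass to the roots with $\Rp\la_j>0$. Physically $\g$ and $\eta$ have the same sign ($\eta=T_0\g/\la_0$ with $T_0,\la_0>0$), so $\g\eta\ge 0$ and hence $b\ge 0$. The quantities $\la_1^2,\la_2^2$ are the two roots $w$ of the quadratic
\[
w^2-\bigl(k+{\bf i}(a+b)\bigr)w+{\bf i}ak=0 .
\]

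\textbf{Claim.} Neither root lies on the closed real axis, and both satisfy $\Ip w>0$.

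A root cannot be real: if $w\in\R$, taking the imaginary part of the equation gives $-(a+b)w+ak=0$, so $w=ak/(a+b)>0$. The real part then gives $w^2-kw=0$, so $w=k$, which forces $b=0$. In that case the roots are exactly $k$ and ${\bf i}a$, so $\la_1^2=k$ is real (pure elastic P-wave) and $\la_2^2={\bf i}a$. That degenerate decoupled case must be excluded, and $b>0$ will be assumed as in the coupled model.

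For $b>0$ the imaginary part of each root never vanishes, so by continuity the sign of $\Ip w$ on each root is constant as we deform the parameters. To fix that sign, deform $b\to 0^+$, where the roots tend to $k$ and ${\bf i}a$. The root near ${\bf i}a$ clearly has positive imaginary part. For the root near $k$, a first-order perturbation gives
\[
w\approx k+\frac{{\bf i}bk}{k-{\bf i}a},
\]
and
\[
\Ip\frac{{\bf i}bk}{k-{\bf i}a}=\frac{bk^2}{k^2+a^2}>0 .
\]
So both roots start in the open upper half-plane. They cannot leave it, since they never meet the real axis. The parameter space $a,k,b>0$ is connected, so $\Ip\la_j^2>0$ for all admissible parameters.

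A more direct route to the same claim: write $w_1w_2={\bf i}ak$, so $\arg w_1+\arg w_2\equiv\pi/2$, and combine this with $\Ip(w_1+w_2)=a+b>0$ to rule out one root in the lower half-plane. This also works but needs some casework, so I would use the continuity argument.

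Once $\Ip\la_j^2>0$, write $\la_j=\alpha+{\bf i}\beta$ with $\alpha=\Rp\la_j>0$ by hypothesis. Then
\[
\Ip\la_j^2=2\alpha\beta>0
\]
forces $\beta>0$, which is the assertion.

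The main obstacle is the sign determination of $\Ip\la_j^2$, in particular excluding real roots and handling the degenerate case $b=0$, where the statement actually fails because $\la_1=k_p$ is real. I will state the needed assumption $\g\eta>0$ explicitly.
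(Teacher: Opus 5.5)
Your proof is correct. It necessarily differs from the paper, which does not prove the lemma: it appeals to physical consistency and defers to \cite{Cakoni1999} and \cite{wang2025} for a rigorous argument. Your route first places both roots $w=\la_j^2$ of $w^2-(k+{\bf i}(a+b))w+{\bf i}ak=0$ in the open upper half-plane, then uses $\Ip\la_j^2=2\Rp\la_j\,\Ip\la_j$. It is self-contained, and it also exposes hypotheses the statement leaves implicit:
\begin{itemize}
\item[(i)] $\om>0$, since for $\om<0$ the roots are complex-conjugated and the conclusion reverses;
\item[(ii)] $\g\neq0$, i.e.\ $\g\eta=T_0\g^2/\la_0>0$, since otherwise $\la_1=k_p$ is real and the lemma is false.
\end{itemize}

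On the sign step, phrase the continuity argument as local constancy of the number of roots, counted with multiplicity, in the open upper half-plane. Following each root individually is imprecise because the two roots coalesce at $b=a$, $k=2a$; this is harmless, since the double root $a(1+{\bf i})$ lies in the upper half-plane.

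You can also avoid continuity and perturbation altogether. Dividing by $w\neq0$ and separating real and imaginary parts gives
\[
\Rp w+\frac{ak\,\Ip w}{|w|^2}=k,\qquad \Ip w+\frac{ak\,\Rp w}{|w|^2}=a+b .
\]
Suppose $\Ip w\le 0$. The first identity gives $\Rp w\ge k>0$, hence $ak\Rp w/|w|^2\le ak/\Rp w\le a$. The second identity then gives $\Ip w\ge b>0$, a contradiction. This proves your Claim in a few lines and shows exactly where $b>0$ is needed.
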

In the following Lemma, we give the estimates on the entries of thermoelastic PML modified fundamental 
solution and its derivatives of all orders.
\begin{lemma}\label{pmlfse}
Denote $\La=\min\{\Rp\la_1,\Rp\la_2,\la_3\}$ and let $d(\wid{x},\wid{y})$ be the complex distance 
which is defined in \eqref{complexdistance}. Under the assumption $\zeta\geq 1$, $ \wid{\Phi}_{ij}(x-y,\om)$ 
satisfies the following estimates
\begin{equation}
	\lvert \wid{\Phi}_{ij}(x-y,\om)\rvert \leq C[(1+\zeta \al_0)^2+\al_0^2]\left(\frac{1}{\lvert x-y\rvert}
	+\frac{1}{\lvert x-y\rvert^3}\right) e^{-\La\cdot\Ip d(\wid{x},\wid{y})},
\end{equation}
and for its derivatives we have
\begin{equation}
	\begin{aligned}
		&\left\lvert\prod_{k=1}^3\left(\frac{\pa}{\pa x_k}\right)^{a_k}
		\prod_{k=1}^3\left(\frac{\pa}{\pa y_k}\right)^{b_k}
		 \wid{\Phi}_{ij}(x-y,\om)\right\rvert \\
		&\leq C[(1+\zeta \al_0)^2+\al_0^2]^{\frac{m_1}{2}}\left(\frac{1}{\lvert x-y\rvert}
		+\frac{1}{\lvert x-y\rvert^{m_2}}\right) e^{-\La\cdot\Ip d(\wid{x},\wid{y})},
	\end{aligned}
\end{equation}
where $a_k,b_k,k=1,2,3$ and $m_1,m_2$ are positive integers.
\end{lemma}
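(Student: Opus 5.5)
We estimate $\wid{\Phi}_{ij}$ term by term from its abstract form \eqref{PMLextensionfs}. The key inputs are a two-sided comparison between the complex distance and the real distance, the bound $|e^{{\bf i}\la_l d}|\le e^{-\La\Ip d}$, and Lemma \ref{f_lambda} to control derivatives of $f_{\la_l}$.

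\emph{Step 1: comparing $d(\wid x,\wid y)$ with $|x-y|$.} Write $\wid{x}_j-\wid{y}_j=(x_j-y_j)+z\int_{y_j}^{x_j}\al_j(t)dt$. Since $0\le\al_j\le\al_0$, we get
\begin{equation*}
|\wid x_j-\wid y_j|\le |x_j-y_j|\,(1+|z|\al_0)\le C(1+\zeta\al_0+\al_0)|x_j-y_j|.
\end{equation*}
For the lower bound, set $a_j=x_j-y_j$ and $b_j=\int_{y_j}^{x_j}\al_j(t)dt$. Then $a_jb_j\ge0$ and
\begin{equation*}
d^2=\sum_j\big[(a_j+\zeta b_j)^2-b_j^2\big]+2{\bf i}\sum_j b_j(a_j+\zeta b_j).
\end{equation*}
We will show that $|d^2|\ge c|x-y|^2$. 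When $\sum b_j^2$ is small relative to $\sum(a_j+\zeta b_j)^2$, the real part dominates. Otherwise the imaginary part satisfies $\sum b_j(a_j+\zeta b_j)\ge\zeta\sum b_j^2$, which is comparable to $|x-y|^2$ once $\zeta\ge1$. Combined with $|a_j+\zeta b_j|\ge|a_j|$, this yields $|d|\ge c|x-y|$ with $c$ independent of $\al_0,\zeta$. This lower bound is where the hypothesis $\zeta\ge1$ enters, and it is the step I expect to be the main obstacle: the case split has to be done carefully so that $c$ does not degenerate.

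\emph{Step 2: bounding $\wid\Phi_{ij}$ itself.} For each $l$ we have $|e^{{\bf i}\la_l d}|=e^{-\Rp\la_l\Ip d-\Ip\la_l\Rp d}$. I would like to bound this by $e^{-\La\Ip d}$. That requires $\Rp d\ge0$, which holds for the principal branch of the square root, together with $\Ip\la_l>0$ (from the preceding lemma) for $l=1,2$, and $\la_3$ real. Using the polynomial form from Lemma \ref{f_lambda}, we have $f^{(n)}_{\la_l}(d)=P(d)e^{{\bf i}\la_ld}/d^{n+1}$ with $\deg P=n$. This gives
\begin{equation*}
|f^{(n)}_{\la_l}(d)|\le C\big(|d|^{-1}+|d|^{-n-1}\big)e^{-\La\Ip d}.
\end{equation*}
The ratios $(\wid x_i-\wid y_i)/d$ are bounded by $C(1+\zeta\al_0+\al_0)$ by Step 1, and products of two such ratios are bounded by $C[(1+\zeta\al_0)^2+\al_0^2]$. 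Inserting these bounds with $n\le2$ and applying Step 1's comparison $|d|\sim|x-y|$ yields the first estimate, since intermediate powers of $|x-y|^{-1}$ are dominated by $|x-y|^{-1}+|x-y|^{-3}$.

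\emph{Step 3: the derivative estimates.} Each derivative $\pa_{x_k}$ (or $\pa_{y_k}$) acts through the chain rule. It falls on $\wid x_k-\wid y_k$, producing the factor $s_k(x_k)$ with $|s_k|\le 1+|z|\al_0$, or on $d$, producing $\pa_{x_k}d=s_k(\wid x_k-\wid y_k)/d$, which is bounded by the same quantity by Step 1. By induction on $\sum(a_k+b_k)$, every derivative of $\wid\Phi_{ij}$ is a finite sum of terms of the form
\begin{equation*}
\frac{\text{polynomial in }(\wid x-\wid y),\ s_k,\ s_k',\ s_k''}{d^{m}}\; f^{(n)}_{\la_l}(d).
\end{equation*}
These terms are bounded as in Step 2. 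Since $\al_j\in C^2$, derivatives of order $\le2$ falling on $s_k$ are bounded by $C\al_0|z|$ (with constants depending on $\ov{l_j}-l_j$). Collecting all factors of $(1+\zeta\al_0)$ and $\al_0$ gives the power $[(1+\zeta\al_0)^2+\al_0^2]^{m_1/2}$. The accumulated negative powers of $|d|$ give $|x-y|^{-1}+|x-y|^{-m_2}$, and the exponential factor is unchanged.
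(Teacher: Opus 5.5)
Your proposal is correct and follows essentially the same route as the paper: a two-sided comparison of $|d(\wid{x},\wid{y})|$ with $|x-y|$, bounds on the ratios $(\wid{x}_i-\wid{y}_i)/d(\wid{x},\wid{y})$, and Lemma \ref{f_lambda} combined with $|e^{{\bf i}\la_l d}|\le e^{-\La\Ip d}$ (which needs $\Rp d\ge 0$ and also $\Ip d\ge 0$, the latter immediate from $\Ip d^2\ge 0$ and the principal branch), followed by the chain rule for the derivatives. The lower bound you flagged as the main obstacle is in fact a one-liner, and this is how the paper does it: since $a_jb_j\ge 0$, one has $\Rp d^2=\sum_j\big[a_j^2+2\zeta a_jb_j+(\zeta^2-1)b_j^2\big]\ge |x-y|^2$ for $\zeta\ge 1$, so $|d|\ge|x-y|$ without any case split.
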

	
\begin{proof}
It follows from \eqref{complexcoo} that the components of the difference of $\wid{x}=(\wid{x}_1,\wid{x}_2,
\wid{x}_3)^{\top}$ and $\wid{y}=(\wid{y}_1,\wid{y}_2,\wid{y}_3)^{\top}$ are
\begin{equation*}
	\begin{aligned}
		\wid{x}_j-\wid{y}_j &=\int_{y_j}^{x_j} [1+z\al_j(t)]dt=(x_j-y_j)[1+z\al_j(t_j)],
	\end{aligned}
\end{equation*}
where $t_j$ lies between $y_j$ and $x_j$ (by the mean value theorem for integrals). 
The square of the complex distance $d(\wid{x},\wid{y})$ is
\begin{equation*}
\begin{aligned}
		&\sum_{j=1}^3(\wid{x}_j-\wid{y}_j)^2=\sum_{j=1}^3(x_j-y_j)^2\Big[1+z\al_j(t_j)\Big]^2.
	\end{aligned}
\end{equation*}	
Its modulus satisfies the following upper bound estimate:
\begin{equation}\label{upper_bound}
	\begin{aligned}
		\lvert d(\wid{x},\wid{y})\rvert^2
		=&\left\lvert \sum_{j=1}^3(x_j-y_j)^2\Big[1+z\al_j(t_j)\Big]^2 \right\rvert\\
		\leq &\sum_{j=1}^3(x_j-y_j)^2\lvert 1+z\al_j(t_j) \rvert^2\\
		=&\sum_{j=1}^3(x_j-y_j)^2\Bigg\{\Big[1+\zeta \al_j(t_j)\Big]^2+\al_j(t_j)^2\Bigg\}\\
		\leq &\Big[(1+\zeta \al_0)^2+\al_0^2\Big]\sum_{j=1}^{3}(x_j-y_j)^2.
	\end{aligned}
\end{equation}
For $\zeta\geq 1$, we have the lower bound estimate
\begin{equation}\label{lower_bound}
	\begin{aligned}
		\lvert d(\wid{x},\wid{y})\rvert^2&\geq \Rp \sum\limits_{j=1}^3(x_j-y_j)^2\Big[1+z\al_j(t_j)\Big]^2\\
		 &=\sum_{j=1}^3(x_j-y_j)^2\Bigg\{\Big[1+\zeta\al_j(t_j)\Big]^2-\al_j(t_j)^2\Bigg\}\\
		 &\geq \sum_{j=1}^{3}(x_j-y_j)^2.
	\end{aligned}
\end{equation}
Therefore, under the assumption $\zeta\geq 1$, a combination of \eqref{upper_bound} and \eqref{lower_bound}
shows that the complex distance $d(\wid{x},\wid{y})$ satisfies
\begin{equation}\label{estimate_dis}
	\lvert x-y\rvert \leq \lvert d(\wid{x},\wid{y})\rvert 
	\leq \sqrt {(1+\zeta \al_0)^2+\al_0^2}\cdot\lvert x-y\rvert,
\end{equation}
and
\begin{equation*}
	\begin{aligned}
		\frac{\lvert \wid{x}_j-\wid{y}_j\rvert}{\lvert d(\wid{x},\wid{y})\rvert }
		&=\frac{\lvert (x_j-y_j)[1+z\al_j(t_j)]\rvert}{\lvert d(\wid{x},\wid{y})\rvert}
		\leq\frac{\lvert x_j-y_j\rvert\sqrt {(1+\zeta \al_0)^2+\al_0^2}}{\lvert d(\wid{x},\wid{y})\rvert}\\
		&\leq \frac{\lvert x-y\rvert\sqrt {(1+\zeta \al_0)^2+\al_0^2}}{\lvert d(\wid{x},\wid{y})\rvert}
		\leq \sqrt {(1+\zeta \al_0)^2+\al_0^2}.
	\end{aligned}
\end{equation*}
It follows from the abstract form \eqref{PMLextensionfs} that
\begin{equation*}
	\begin{aligned}
		\lvert \wid{\Phi}_{ij}(x-y,\om)\rvert 
		\leq & \sum\limits_{l=1}^{3}\bigg\{C_{1l}\lvert f_{\la_l}(d(\wid{x},\wid{y}))\rvert+(C_{2l}+C_{3l})
		\sqrt{(1+\zeta \al_0)^2+\al_0^2}\cdot\lvert f_{\la_l}^{'}(d(\wid{x},\wid{y}))\rvert\\
		&+C_{4l}[(1+\zeta \al_0)^2+\al_0^2] \lvert f_{\la_l}^{''}(d(\wid{x},\wid{y}))\rvert
		+\frac{C_{5l}}{\lvert d(\wid{x},\wid{y})\rvert}[(1+\zeta \al_0)^2+\al_0^2] 
		\lvert f_{\la_l}^{'}(d(\wid{x},\wid{y}))\rvert\bigg\},
	\end{aligned}
\end{equation*}
where $C_{tl}$ are positive constants and $f_{\la_{l}}(z)=e^{{\bf i}\la_lz}/z$ for $t=1,2,3,4,5$ and $l=1,2,3$.
Simple calculations imply that
\begin{equation}\label{exp_pmld}
	\lvert e^{{\bf i}\la_ld(\wid{x},\wid{y})}\rvert 
	= e^{-\Rp\la_l\cdot\Ip d(\wid{x},\wid{y})-\Ip\la_l\cdot\Rp d(\wid{x},\wid{y})}.
\end{equation}
We split the analysis into two cases, depending on whether $\lvert d(\wid{x},\wid{y})\rvert$ 
is less than $1$ or not.

Case 1: $\lvert d(\wid{x},\wid{y})\rvert <1$. Equations \eqref{estimate_dis}-\eqref{exp_pmld}, 
together with Lemma \ref{f_lambda} give
\begin{equation}\label{case1}
	\begin{aligned}
		\lvert \wid{\Phi}_{ij}(x-y,\om)\rvert 
		\leq & \sum\limits_{l=1}^{3}\bigg(C_{1l}\frac{1}{\lvert d(\wid{x},\wid{y})\rvert}
		+(C_{2l}+C_{3l}) \frac{\sqrt {(1+\zeta \al_0)^2+\al_0^2}}{\lvert d(\wid{x},\wid{y})\rvert^2}\\
		&\qquad+(C_{4l}+C_{5l}) \frac{(1+\zeta \al_0)^2+\al_0^2}{\lvert d(\wid{x},\wid{y})\rvert^3}\bigg)\\
		\leq\;& C\sum\limits_{l=1}^{3} \frac{(1+\zeta \al_0)^2+\al_0^2}{\lvert x-y\rvert^3}
		\leq  C\sum\limits_{l=1}^{3}\frac{e^{-\La\cdot\Ip d(\wid{x},\wid{y})}}{e^{-\La}} 
		\frac{(1+\zeta \al_0)^2+\al_0^2}{\lvert x-y\rvert^3}\\
		\leq\;& C\sum\limits_{l=1}^{3}\frac{(1+\zeta \al_0)^2+\al_0^2}{\lvert x-y\rvert^3}
		e^{-\La\cdot\Ip d(\wid{x},\wid{y})},
	\end{aligned}
\end{equation}
where $\La=\min\{\Rp\la_1,\Rp\la_2,\Rp\la_3=\la_3\}$.

Case 2: $\lvert d(\wid{x},\wid{y})\rvert \geq1$. It holds that
\begin{equation*}
	\begin{aligned}
		\lvert \wid{\Phi}_{ij}(x-y,\om)\rvert 
		\leq &\sum\limits_{l=1}^{3} \Bigg\{C_{1l}\frac{e^{-\Rp\la_l\cdot\Ip d(\wid{x},\wid{y})}}
		{\lvert d(\wid{x},\wid{y})\rvert}+(C_{2l}+C_{3l})\sqrt {(1+\zeta \al_0)^2+\al_0^2} 
		\frac{e^{-\Rp\la_l\cdot\Ip d(\wid{x},\wid{y})}}{\lvert d(\wid{x},\wid{y})\rvert}\\
		&+(C_{4l}+C_{5l})[(1+\zeta \al_0)^2+\al_0^2] \frac{e^{-\Rp\la_l
		\cdot\Ip d(\wid{x},\wid{y})}}{\lvert d(\wid{x},\wid{y})\rvert}\Bigg\}\\
		\leq\;& C\sum\limits_{l=1}^{3} [(1+\zeta \al_0)^2+\al_0^2] 
		\frac{e^{-\La\cdot\Ip d(\wid{x},\wid{y})}}{\lvert x-y\rvert},
	\end{aligned}
\end{equation*}
This, combined with \eqref{case1} leads to
\begin{equation*}
	\begin{aligned}
		&\lvert \wid{\Phi}_{ij}(x-y,\om)\rvert \leq C[(1+\zeta \al_0)^2+\al_0^2]
		\Big(\frac{1}{\lvert x-y\rvert}+\frac{1}{\lvert x-y\rvert^3}\Big) e^{-\La\cdot\Ip d(\wid{x},\wid{y})}.
	\end{aligned}
\end{equation*}
The modules of all the derivatives of $ \wid{\Phi}_{ij}(x-y,\om)$ can be similarly estimated. 
The proof is thus complete.
\end{proof}

\section{The PML equation in truncated domain and layer}
\setcounter{equation}{0}
In this section we shall show that the PML system in the truncated domain
\begin{subequations}
	\begin{align} \label{tPML1}
	\na \cdot( \wid{\sig}(\wid{\bs{u}})A)+\rho \om^2 J \wid{\bs{u}}-\g A \na\wid{ p}
	={\bf Q} \quad &\text{in } \Om_2, \\\label{tPML2}
	\na \cdot (K\na\wid{ p})+qJ \wid{p}+{\bf{i}}\om \eta \na \cdot(A\wid{ \bs{u}})=0
	\quad &\text{in } \Om_2,  \\\label{tPML3}
	(\wid{\bs{u}}^{\top},\wid{p})^{\top}=0 \quad &\text{on } \pa\Om, \\\label{tPML4}
	(\wid{\bs{u}}^{\top},\wid{p})^{\top}=0 \quad &\text{on } \pa B_2,
	\end{align}
\end{subequations}
has a unique weak solution in the space $H_0^1( \Om_2)$ under appropriate constraints 
on the thermoelastic and PML parameters. To this end, we introduce the sesquilinear forms
\begin{align}
	\mathcal{B}_{D}(\Phi,\Psi)=&\int_{D}\Big[\wid{\sig}(\wid{\bs{u}})A \colon \na\ov{\bs{u}}'
	-\rho \om^2J\wid{\bs{u}}\cdot \ov {\bs{u}}'-\g \wid{p}\na\cdot (A\ov{\bs{u}}') \no\\
	\label{biform_B}
	&+\na \ov{p}'\cdot(K\na\wid{p})-qJ\wid{p}\ov{p}'-{\bf{i}}\om \eta \ov{p}'\na \cdot (A\wid{\bs{u}})\Big]dx,\\ 
	\label{biform_A}
	\mathcal{A}_{D}(\Phi,\Psi)=&\int_{D}\left[\wid{\sig}(\wid{\bs{u}})A \colon \na\ov{ \bs{u}}'
	+\na \ov{p}'\cdot(K\na\wid{p})\right]dx,
\end{align}
where $\Phi=(\wid{\bs{u}},\wid{p})$, $\Psi=(\bs{u}',p')$ and $D$ is a bounded domain with piecewise smooth boundary.

To get the coercivity of the sesquilinear form $\mathcal{A}_{D}$, we need the following elementary lemma.
\begin{lemma}{\rm \cite[Lemma 3.1]{CXZ2016}.}\label{lowbound_s}
Assume that $\zeta \geq \sqrt{(\la+2\mu)/\mu}$ and $D$ is a bounded domain with piecewise smooth boundary. 
Then for $j=1,2,3$, we have
\begin{equation*}
	\begin{aligned}
		&\Big(1+\frac{\mu}{\la+\mu}\Big)\Rp \frac{s_1(x_1)s_2(x_2)s_3(x_3)}{s_j(x_j)^2}\\{}
		&\geq \frac{[1+\zeta \al_1(x_1)][1+\zeta \al_2(x_2)][1+\zeta \al_3(x_3)]}{[1+\zeta \al_j(x_j)]^2}
		+\frac{\mu}{\la+\mu}\frac{1}{\lvert s_j(x_j) \rvert ^2}.
	\end{aligned}
\end{equation*}
\end{lemma}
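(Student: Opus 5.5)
By the symmetry of the statement in the indices, it suffices to treat $j=1$. I will reduce the inequality to a polynomial inequality in the nonnegative quantities $a_k:=\al_k(x_k)\ge 0$, and then compare coefficients, using the hypothesis on $\zeta$ precisely where it is needed.

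\textbf{Setup.} Put $t_k:=1+\zeta a_k\ge 1$, so that $s_k=t_k+{\bf i}a_k$ and $|s_1|^2=t_1^2+a_1^2$. Also set $\beta:=\mu/(\la+\mu)>0$; this is positive since $\mu>0$ and $3\la+2\mu>0$ force $\la+\mu>0$. Writing $s_2s_3/s_1=s_2s_3\ov{s_1}/|s_1|^2$ and multiplying out gives
\begin{equation*}
\Rp\frac{s_2s_3}{s_1}=\frac{t_1t_2t_3-t_1a_2a_3+a_1t_2a_3+a_1t_3a_2}{t_1^2+a_1^2}.
\end{equation*}
Multiplying the desired inequality by $|s_1|^2>0$, the claim becomes
\begin{equation*}
(1+\beta)\big[t_1t_2t_3-t_1a_2a_3+a_1(t_2a_3+t_3a_2)\big]\ge t_1t_2t_3+\frac{t_2t_3a_1^2}{t_1}+\beta .
\end{equation*}
Cancelling $t_1t_2t_3$, this is equivalent to
\begin{equation*}
\beta(t_1t_2t_3-1)+(1+\beta)a_1(t_2a_3+t_3a_2)\ge (1+\beta)t_1a_2a_3+\frac{t_2t_3a_1^2}{t_1}.
\end{equation*}

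\textbf{Bounding the right-hand side.} Since $a_1\le t_1/\zeta$, we have $a_1^2/t_1\le a_1/\zeta$. This yields a polynomial upper bound:
\begin{equation*}
\frac{t_2t_3a_1^2}{t_1}\le \frac{a_1}{\zeta}(1+\zeta a_2)(1+\zeta a_3)=\frac{a_1}{\zeta}+a_1a_2+a_1a_3+\zeta a_1a_2a_3 .
\end{equation*}
Also, $(1+\beta)t_1a_2a_3=(1+\beta)(a_2a_3+\zeta a_1a_2a_3)$.

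\textbf{Expanding the left-hand side.} We have
\begin{equation*}
\beta(t_1t_2t_3-1)=\beta\big[\zeta(a_1+a_2+a_3)+\zeta^2(a_1a_2+a_1a_3+a_2a_3)+\zeta^3a_1a_2a_3\big].
\end{equation*}
Moreover, $(1+\beta)a_1(t_2a_3+t_3a_2)\ge 2\zeta a_1a_2a_3+a_1(a_2+a_3)$.

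\textbf{Monomial comparison.} It now suffices to match each monomial on the right against one on the left.
\begin{itemize}
\item $a_2a_3$: we need $\beta\zeta^2\ge 1+\beta$, i.e.\ $\zeta^2\ge 1+1/\beta=(\la+2\mu)/\mu$. This is exactly the hypothesis.
\item $a_1a_2a_3$: the right side carries coefficient $(1+\beta)\zeta+\zeta$. The left side supplies $\beta\zeta^3+2\zeta$, and $\beta\zeta^3\ge(1+\beta)\zeta$ again by the hypothesis.
\item $a_1$: we need $\beta\zeta\ge 1/\zeta$, i.e.\ $\beta\zeta^2\ge 1$, which follows from $\beta\zeta^2\ge 1+\beta$.
\item $a_1a_2$ and $a_1a_3$: these are covered by $\beta\zeta^2\ge 1$, or directly by the term $a_1(a_2+a_3)$.
\end{itemize}
All leftover terms on the left are nonnegative because $a_k\ge0$ and $t_k\ge1$.

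\textbf{Conclusion.} This gives the inequality pointwise for every $x$, so the domain $D$ plays no role. The only delicate step is the treatment of $t_2t_3a_1^2/t_1$: a cruder bound such as $a_1^2/t_1\le t_1/\zeta^2$ loses too much to close the argument. The point of the estimate $a_1\le t_1/\zeta$ used above is that it keeps every monomial at a degree the left-hand side can absorb. With that bound in place, the constraint $\zeta^2\ge(\la+2\mu)/\mu$ appears exactly as the condition on the $a_2a_3$ coefficient.
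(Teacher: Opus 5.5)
Your proof is correct. The paper gives no argument of its own for this lemma; it simply imports it from \cite[Lemma 3.1]{CXZ2016}. Your proposal is therefore a self-contained verification rather than an alternative to anything in the text.

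Each step checks out:
\begin{itemize}
\item the formula for $\Rp(s_2s_3/s_1)$;
\item the reduction after multiplying by $|s_1|^2$ and cancelling $t_1t_2t_3$;
\item the bound $t_2t_3a_1^2/t_1\le a_1t_2t_3/\zeta$, which follows from $\zeta a_1\le t_1$;
\item the coefficient comparison for $a_1$, $a_1a_2$, $a_1a_3$, $a_2a_3$ and $a_1a_2a_3$.
\end{itemize}
These use only $\al_k\ge 0$ and $\la+\mu>0$, and you justify both from the paper's assumptions.

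Your direct computation also makes two things explicit that the citation leaves hidden. First, the inequality is pointwise, so the hypothesis on $D$ plays no role. Second, the assumption $\zeta^2\ge(\la+2\mu)/\mu$ is needed only for the $a_2a_3$ coefficient. There it cannot be weakened if the estimate is to hold for all $\al_0>0$. To see this, take $a_1=0$ and $a_2=a_3=a$ in your reduced inequality, with $\beta=\mu/(\la+\mu)$. It becomes $\beta(2\zeta a+\zeta^2a^2)\ge(1+\beta)a^2$, which forces $\beta\zeta^2\ge 1+\beta$ as $a\to\infty$.
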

The following lemma extends arguments from \cite[Lemma 3.3]{CXZ2016}.
\begin{lemma}[Ellipticity lemma]\label{ellip}
Assume that $\zeta \geq \sqrt{(\la+2\mu)/\mu}$ and $D$ is a bounded domain with piecewise smooth boundary.
Then for any $\wid{\bs{u}} \in H_{0}^1(D)^3$ and $\wid{p} \in H_{0}^1(D)$, we have
\begin{equation}\label{elliptic_u}
	\Rp \int_{D}\wid{\sig}(\wid{\bs{u}})A \colon \na\ov{ \wid{\bs{u}}}dx
	\geq C_1\Vert \na \wid{\bs{u}}\Vert_{L^2(D)^{3\times 3}}^2,
\end{equation}
and
\begin{equation}\label{elliptic_p}
	\Rp \int_{D}\na \ov{\wid{p}}\cdot(K\na\wid{p})dx\geq C_2\Vert \na \wid{p}\Vert_{L^2(D)^3}^2,
\end{equation}
where constants $C_1,C_2$ depend on $\al_0$ and the size of $D$, growing at most polynomially in both.
\end{lemma}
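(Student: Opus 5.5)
The plan is to prove \eqref{elliptic_p} and \eqref{elliptic_u} separately. In both cases the integrand becomes a sum of terms with weights $s_1s_2s_3/s_j^2 = J/s_j^2$, and Lemma \ref{lowbound_s} controls the real part of those weights. Dividing Lemma \ref{lowbound_s} by $1+\mu/(\la+\mu)$ and dropping the nonnegative first term gives the pointwise bound
\begin{equation*}
\Rp\frac{J}{s_j^2}\geq \frac{\mu}{\la+2\mu}\frac{1}{\lvert s_j\rvert^2}\geq \frac{\mu}{\la+2\mu}\frac{1}{(1+\zeta\al_0)^2+\al_0^2}.
\end{equation*}
This is the source of the polynomial dependence on $\al_0$; the problem parameter $\zeta$ is fixed.

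\emph{Thermal part.} Since $K=\Diag\{J/s_1^2,J/s_2^2,J/s_3^2\}$, we have
\begin{equation*}
\Rp\int_D \na\ov{\wid p}\cdot(K\na\wid p)\,dx=\sum_{j=1}^3\int_D \Rp\frac{J}{s_j^2}\,\lvert\pa_j\wid p\rvert^2dx.
\end{equation*}
The pointwise bound above then gives \eqref{elliptic_p} with $C_2=\mu/\{(\la+2\mu)[(1+\zeta\al_0)^2+\al_0^2]\}$. This step needs neither integration by parts nor the boundary condition.

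\emph{Elastic part.} Use $\wid\epsilon(\wid{\bs u})=\frac12(\na\wid{\bs u}B^\top+B(\na\wid{\bs u})^\top)$ and $A=JB$ to expand $\wid\sig(\wid{\bs u})A:\na\ov{\wid{\bs u}}$ explicitly in components. Writing $w_{ij}=\pa_j\wid u_i$, the expansion splits into three groups:
\begin{itemize}
\item diagonal terms $(\la+2\mu)\frac{J}{s_j^2}\lvert w_{jj}\rvert^2$;
\item off-diagonal shear terms $\mu\frac{J}{s_j^2}\lvert w_{ij}\rvert^2$ for $i\neq j$;
\item mixed terms $\la\frac{J}{s_is_j}w_{jj}\ov{w_{ii}}+\mu\frac{J}{s_is_j}w_{ij}\ov{w_{ji}}$ for $i\neq j$.
\end{itemize}
The key observation is that for $i\neq j$ the coefficient $J/(s_is_j)=s_k(x_k)$, with $k\notin\{i,j\}$, depends only on $x_k$. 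For $\wid{\bs u}\in H_0^1(D)^3$ (extended by zero), integrating by parts twice therefore gives
\begin{equation*}
\int_D s_k\,\pa_j\wid u_i\,\pa_i\ov{\wid u_j}\,dx=\int_D s_k\,\pa_i\wid u_i\,\pa_j\ov{\wid u_j}\,dx.
\end{equation*}
So the mixed terms collapse to $(\la+\mu)\sum_{i\neq j}\int_D s_k\,w_{jj}\ov{w_{ii}}\,dx$.

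Next I rewrite the diagonal coefficient as $\la+2\mu=(\la+\mu)(1+\frac{\mu}{\la+\mu})$ and apply Lemma \ref{lowbound_s} to it. With $\beta_j:=1+\zeta\al_j(x_j)$, this yields
\begin{equation*}
(\la+\mu)\sum_j\Big[\frac{\beta_1\beta_2\beta_3}{\beta_j^2}\lvert w_{jj}\rvert^2+\frac{\mu}{\la+\mu}\frac{\lvert w_{jj}\rvert^2}{\lvert s_j\rvert^2}\Big].
\end{equation*}
The first part, together with the real part of the collapsed cross term, should form a nonnegative quadratic form in $(w_{11},w_{22},w_{33})$ at each point. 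Writing $s_k=\beta_k+{\bf i}\al_k$ and $\beta_1\beta_2\beta_3/\beta_j^2=\beta_k\beta_i/\beta_j$, I would aim to show
\begin{equation*}
\Big\lvert\Rp\big(s_k w_{jj}\ov{w_{ii}}+s_k w_{ii}\ov{w_{jj}}\big)\Big\rvert\leq \beta_k\Big(\frac{\beta_i}{\beta_j}\lvert w_{jj}\rvert^2+\frac{\beta_j}{\beta_i}\lvert w_{ii}\rvert^2\Big),
\end{equation*}
by weighted Cauchy--Schwarz, for example writing the sum as $\beta_k\lvert\sum_j(\beta_1\beta_2\beta_3)^{1/2}\beta_j^{-1}w_{jj}\rvert^2$ plus an imaginary-part correction.

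Handling the contribution of $\Rp s_k$ this way leaves $\Ip s_k=\al_k$, which I would absorb using $\zeta\geq\sqrt{(\la+2\mu)/\mu}$. That hypothesis was built into Lemma \ref{lowbound_s} precisely to leave the surplus $\frac{\mu}{\la+\mu}\lvert s_j\rvert^{-2}$ available for this purpose. Once this pointwise inequality is in hand, what remains is bounded below by $c\sum_j\lvert w_{jj}\rvert^2/\lvert s_j\rvert^2$ plus the shear terms. The shear terms are controlled by the pointwise bound on $\Rp(J/s_j^2)$, giving $\geq c\lvert w_{ij}\rvert^2/\lvert s_j\rvert^2$. Summing yields \eqref{elliptic_u} with $C_1\sim[(1+\zeta\al_0)^2+\al_0^2]^{-1}$.

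The main obstacle is this pointwise absorption of the non-Hermitian cross term $(\la+\mu)\Rp\big(s_k w_{jj}\ov{w_{ii}}\big)$. If the direct Cauchy--Schwarz estimate fails to close, I would follow \cite[Lemma 3.3]{CXZ2016} more literally: choose the splitting $\la+2\mu=(\la+\mu)+\mu$ so that $(\la+\mu)$ times the full double sum $\sum_{i,j}\frac{J}{s_is_j}w_{jj}\ov{w_{ii}}$ is handled as a single block, with only the $\mu$-part left to supply coercivity. This may cost an additional polynomial factor in $\al_0$.
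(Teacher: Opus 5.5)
Your treatment of \eqref{elliptic_p} is the paper's argument. For \eqref{elliptic_u}, which the paper only cites from \cite[Lemma 3.3]{CXZ2016}, your component expansion and the double integration by parts reducing the mixed terms to $(\la+\mu)\sum_{i\neq j}s_k w_{jj}\ov{w_{ii}}$ are correct. The gap is in the step you flag as the main obstacle: the closing mechanism you propose does not work.

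First, there is no imaginary-part correction to absorb. The ordered pairs $(i,j)$ and $(j,i)$ carry the same $s_k$, and $w_{jj}\ov{w_{ii}}+w_{ii}\ov{w_{jj}}$ is real. Hence $\Rp\big(s_k(w_{jj}\ov{w_{ii}}+w_{ii}\ov{w_{jj}})\big)=2\beta_k\Rp(w_{jj}\ov{w_{ii}})$, and $\al_k=\Ip s_k$ drops out identically.

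Second, and more seriously, your pairwise inequality is true but too weak. Summed over the three pairs, it charges the weight $2\beta_1\beta_2\beta_3/\beta_j^2$ against $\lvert w_{jj}\rvert^2$. That is twice the weight $\beta_1\beta_2\beta_3/\beta_j^2$ supplied by Lemma~\ref{lowbound_s} (both multiplied by $\la+\mu$). The surplus $\mu\lvert s_j\rvert^{-2}$ (after multiplying by $\la+\mu$) cannot cover this deficit. Already for $\al_j\equiv 0$, this route leaves the coefficient $(\la+2\mu)-2(\la+\mu)=-\la$ in front of $\sum_j\lvert w_{jj}\rvert^2$, which is negative for $\la>0$.

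Your fallback fails too. The block $(\la+\mu)\sum_{i,j}\frac{J}{s_is_j}w_{jj}\ov{w_{ii}}$ alone does not have a sign-definite real part. For $\al_1=0$, $\al_2=\al_3=a>0$ and $(w_{11},w_{22},w_{33})=(1,-\beta_2/2,-\beta_2/2)$, its real part is $-(\la+\mu)a^2$. So it cannot be separated from the $\mu$-part of the diagonal.

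What closes the argument is to complete the square over all three pairs at once. Apply Lemma~\ref{lowbound_s} to the full diagonal coefficient $(\la+2\mu)\Rp(J/s_j^2)=(\la+\mu)\big(1+\frac{\mu}{\la+\mu}\big)\Rp(J/s_j^2)$. Then use $\beta_k=\beta_1\beta_2\beta_3/(\beta_i\beta_j)$ for $\{i,j,k\}=\{1,2,3\}$:
\[
(\la+\mu)\Big[\sum_{j=1}^3\frac{\beta_1\beta_2\beta_3}{\beta_j^2}\lvert w_{jj}\rvert^2+2\sum_{i<j}\beta_k\Rp(w_{ii}\ov{w_{jj}})\Big]=(\la+\mu)\,\beta_1\beta_2\beta_3\Big\lvert\sum_{j=1}^3\frac{w_{jj}}{\beta_j}\Big\rvert^2\geq 0 .
\]
Hence the real part of the integrand is at least $\mu\sum_j\lvert w_{jj}\rvert^2/\lvert s_j\rvert^2+\mu\sum_{i\neq j}\Rp(J/s_j^2)\lvert w_{ij}\rvert^2$. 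Your pointwise bound on $\Rp(J/s_j^2)$ then finishes the proof with $C_1\geq c\,[(1+\zeta\al_0)^2+\al_0^2]^{-1}$.

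Your parenthetical about writing the sum as a square points this way, but the square is $\beta_1\beta_2\beta_3\lvert\sum_j w_{jj}/\beta_j\rvert^2$, with no stray factor $\beta_k$. It must replace the pairwise estimate, not supplement it. Note also the roles of the two pieces of Lemma~\ref{lowbound_s}. The factor $1+\mu/(\la+\mu)$ is exactly what makes the square complete. The surplus $\frac{\mu}{\la+\mu}\lvert s_j\rvert^{-2}$ is not used for absorption at all; it is what gives coercivity in the diagonal entries $w_{jj}$. With this replacement your outline becomes essentially the argument of \cite[Lemma 3.3]{CXZ2016}.
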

\begin{remark}
For $\bs{u}=(u_1,u_2,u_3)^{\top}$, the usual $H^1$-norm $\|\cdot\|_{H^1(D)^3}$ has the representation
\begin{equation*}
	\|\bs{u}\|_{H^1(D)^3}=\left(\|\bs{u}\|^2_{L^2(D)^3}+\|\na\bs{u}\|^2_{L^2(D)^{3\times 3}}\right)^{1/2}
\end{equation*}
with the Frobenius norm
\begin{equation*}
	\|\na\bs{u}\|_{L^2(D)^{3\times 3}}=\left(\sum_{j=1}^3\int_{D}|\na u_j|^2dx\right)^{1/2}.
\end{equation*}
\end{remark}
\begin{proof}
The proof of inequality \eqref{elliptic_u} is a direct consequence of \cite[Lemma 3.3]{CXZ2016}. 
Recalling the definition of matrix $K$ (see \eqref{matrices_K}), the inequality \eqref{elliptic_p} follows from
\begin{equation*}
	\begin{aligned}
		\Rp \int_{D}\na \ov{\wid{p}}\cdot(K\na\wid{p})dx=&\int_{D}\Rp \frac{s_2(x_2)s_3(x_3)}{s_1(x_1)}
		\Big\lvert  \frac{\pa \wid{p}}{\pa x_1}\Big\rvert^2dx\\
		&+\int_{D}\Rp \frac{s_1(x_1)s_3(x_3)}{s_2(x_2)}\Big\rvert\frac{\pa \wid{p}}{\pa x_2}\Big\lvert^2dx\\
		&+\int_{D}\Rp \frac{s_1(x_1)s_2(x_2)}{s_3(x_3)}\Big\lvert\frac{\pa \wid{p}}{\pa x_3}\Big\rvert^2dx,
	\end{aligned}
\end{equation*}
and the boundedness of $D$ and Lemma \ref{lowbound_s}. The proof is thus complete.
\end{proof}

Now we are in the position to present the main result of this section.
\begin{theorem}\label{maintheorem}
Assume $D=\Om_2$ or $D$ has smooth boundaries. If the parameters satisfy
\begin{equation}\label{pml_pc}
	\begin{cases}
		\ds\zeta \geq \sqrt{(\la+2\mu)/\mu},\\
		\ds\frac{\rho \g^2}{\eta^2}\geq 1,\\
		\ds\zeta^2-1\geq 2\g \zeta ,\\
		\ds\zeta \geq \sqrt{3},\\
		\ds\al_0<\frac{\la+\mu}{2\g(\la+2\mu)},
	\end{cases}
\end{equation}
then there exists a constant $C>0$ that
\begin{equation*}
	\sup_{\Psi\in H_0^1(D)^4\ba\{0\}}\frac{\lvert \mathcal{B}_{D}(\Phi,\Psi) \rvert}
	{\Vert \Psi \Vert_{H_0^1(D)^4}} \geq C \Vert \Phi \Vert_{H_0^1(D)^4},\quad\forall\;\Phi\in H_0^1(D)^4
\end{equation*}
except possibly for a discrete set of frequencies $\om$.
\begin{proof}
We only need to show that the variational problem
\begin{equation}\label{pml_variation}
	\mathcal{B}_{D}(\Phi,\Psi)=-\int_{D}Q_1\cdot \ov{\Psi}dx,\quad \forall\;\Psi \in H_0^1(D)^4
\end{equation}
has a unique weak solution in $H_0^1(D)^4$, where $Q_1$ is in the dual space of the $H_0^1(D)^4$.

For $\Phi=(\wid{\bs{u}}^{\top},\wid{p})^{\top}$, $\Psi=(\bs{u}'^{\top},p')^{\top}$ with 
$\wid{\bs{u}}=(\wid{u}_1,\wid{u}_2,\wid{u}_3)^{\top}$ and $\bs{u}'=(u'_1,u'_2,u'_3)^{\top}$, let
\begin{equation*}
	\mathcal{H}_{D}(\Phi,\Psi):=\sum_{i=1}^{3}(\na \wid{u}_i,\na u'_i)+(\na \wid{p},\na p')+(\Phi,\Psi),
\end{equation*}
where  $(\cdot,\cdot)$ denotes the $L^2$-inner product, defined by $(\bs{u},\bs{v})
=\ds\int_{D}\bs{u}\cdot \ov{\bs{v}}dx$ for complex-valued vector functions $\bs{u},\bs{v}$.

Define operators $\mathcal{P}_1(\om),\mathcal{P}_2(\om): H_0^1(D)^4 \to H_0^1(D)^4$ by
\begin{align}
	\mathcal{H}_D(\mathcal{P}_1\Phi,\Psi) &= \mathcal{A}_D(\Phi,\Psi) + (\Phi,\Psi), \label{e1} \\
	\mathcal{H}_D(\mathcal{P}_2\Phi,\Psi) &= (\Phi,\Psi) + \int_D\Bigl[
	\rho\om^2 J\wid{\bs{u}}\cdot\ov{\bs{u}}'
	+ \g\wid{p}\na\cdot(A\ov{\bs{u}}') \notag \\
	&\quad + qJ\wid{p}\,\ov{p}'
	+ {\bf{i}}\om\eta\ov{p}'\na\cdot(A\wid{\bs{u}})\Bigr]dx, \label{T}
\end{align}
Let
\begin{equation*}
	f^{\Phi}(\Psi)=\int_{D}\Big[\rho \om^2J\wid{\bs{u}}\cdot \ov {\bs{u}}'+\g \wid{p}\na
	\cdot (A\ov{\bs{u}}')+qJ\wid{p}\ov{p}'+{\bf{i}}\om \eta \ov{p}'\na \cdot (A\wid{\bs{u}})\Big]dx
\end{equation*}
and $g^{\Phi}(\Psi)=\ov{f^{\Phi}(\Psi)}.$ The $g^{\Phi}(\Psi)$ is a continuous linear functional 
on the space $H_0^1(D)^4$. Since $H_0^1(D)^4$ is the subspace of $L^2(D)^4$, it follows from 
the Hahn-Banach theorem that the linear functional $g^{\Phi}$ can be extended to a linear functional
$g_1^{\Phi}$ on the whole space $L^2(D)^4$ satisfying
\begin{equation*}
	g_1^{\Phi}(\Psi)=g^{\Phi}(\Psi) ,\quad \forall\;\Psi \in H_0^1(D)^4.
\end{equation*}
Let $f_1^{\Phi}(\Psi)=\ov{g_1^{\Phi}(\Psi)}$, which is an anti-linear functional on the space $L^2(D)^4$.
By the Riesz representation theorem, there exists a unique $u_{\Phi} \in L^2(D)^4$ such that 
$f_1^{\Phi}(\Psi)=(u_{\Phi},\Psi),\;\forall\;\Psi \in L^2(D)^4$. So we can rewrite the right of equation \eqref{T} as
\begin{equation}
	\mathcal{H}_{D}(\mathcal{P}_2\Phi,\Psi)=(\Phi,\Psi)+(u_{\Phi},\Psi)
\end{equation}
By the lemma \ref{ellip}, the sesquilinear form in the right of equations \eqref{e1} is strongly coercive. 
We have that the $\mathcal{P}_1^{-1}$ exists and is continuous by using the Lax-Milgram lemma. 
In the equations \eqref{T}, the regularity theory for the strongly elliptic systems with constant coefficients
in the domains with corners \cite{Savare1998} implies that
\begin{equation*}
	\exists\;s >1, \text{such that}\;\mathcal{P}_2\Phi \in H^s(D)^4, \;{\rm if}\;\Phi+u_{\Phi}\in L^2(D)^4,
\end{equation*}
we know that $\mathcal{P}_2$ is compact from that $H^s(D)^4\cap H_0^1(D)^4$ is compactly embedded in $H_0^1(D)^4$.
The following linear partial differential system
\begin{equation}
	\mathcal{H}_{D}(G,\Psi)=-\int_{D}Q_1\cdot \ov{\Psi}dx,\quad \forall\;\Psi \in H_0^1(D)^4
\end{equation}
always has a unique solution in the $H_0^1(D)^4$. So, finding a solution to equation\eqref{equa1} 
in the $H_0^1(D)^4 $ is equivalent to seeking $\Phi \in H_0^1(D)^4$ such that
\begin{equation}\label{equi}
	\mathcal{P}_1\Phi-\mathcal{P}_2\Phi=G.
\end{equation}
The $\mathcal{P}_1^{-1}\mathcal{P}_2$ is compact since $\mathcal{P}_1^{-1}$ is continuous. 
To study the solvalbilty of operator equation \eqref{equi} by the analytic Fredholm theorem, 
we need to seek a frequency such that there exists a unique solution to \eqref{pml_variation}.

Taking frequency $\om=\frac{\g}{\eta}{\bf i}$ into the sesquilinear \eqref{biform_B} gives
\begin{align} \no
	\mathcal{B}_{D}(\Phi,\Phi)
	&=\int_{D}\Big[\wid{\sig}(\wid{\bs{u}})A \colon \na\ov{\wid{\bs{u}}}
	+\rho \frac{\g^2}{\eta^2}J\wid{\bs{u}}\cdot \ov{\wid{\bs{u}}}+\na \ov{\wid{p}}\cdot(K\na\wid{p})
	+\frac{\g}{\eta \kappa}J\wid{p}\ov{\wid{p}} \\ \label{biform_B_2}
	&\qquad+\g \na \wid{p}\cdot (A\ov{\wid{\bs{u}}})-\g\na \ov{\wid{p}} \cdot (A\wid{\bs{u}})\Big]dx \\ \no
	&= \mathcal{A}_{D}(\Phi,\Phi) +\int_{D}\Big(\rho \frac{\g^2}{\eta^2}J\wid{\bs{u}}\cdot 
	\ov {\wid{\bs{u}}}+\frac{\g}{\eta \kappa}J\wid{p}\ov{\wid{p}}\Big)dx 
	+ \int_{D}\Big[\g \na \wid{p}\cdot (A\ov {\wid{\bs{u}}})
	-\g\na \ov{\wid{p}} \cdot (A\wid{\bs{u}})\Big]dx,
\end{align}
where the sesquilinear $\mathcal{A}_{D}(\Phi,\Phi)$ is defined by \eqref{biform_A}. 
Direct calculation implies
\begin{align}\label{reJ}
	\Rp J &= \prod_{j=1}^3[1+\zeta \al_j(x_j)] - \sum_{j=1}^3\frac{\al_1(x_1)\al_2(x_2)
	\al_3(x_3)}{\al_j(x_j)}-3\zeta\al_1(x_1)\al_2(x_2)\al_3(x_3)\\ \no
	&=1+\zeta\sum_{j=1}^{3}\al_j(x_j) + (\zeta^2-1)\sum_{j=1}^3\frac{\al_1(x_1)\al_2(x_2)
	\al_3(x_3)}{\al_j(x_j)}+(\zeta^3-3\zeta)\al_1(x_1)\al_2(x_2)\al_3(x_3).
\end{align}
For given vector $\wid{\bs{u}}=(\wid{u}_1,\wid{u}_2,\wid{u}_3)^{\top}$ and 
recalling the definition of matrix $A$ (see \eqref{matrix_A}), we obtain
\begin{equation*}
	\begin{aligned}
		\Rp\int_{D}\g \na \wid{p}\cdot (A\ov {\wid{\bs{u}}})dx
		&=\g \Rp\int_{D}\left(s_2s_3\frac{\pa \wid{p}}{\pa x_1}\ov {\wid{u}_1} 
		+s_1s_3\frac{\pa \wid{p}}{\pa x_2}\ov {\wid{u}_2}
		+s_1s_2\frac{\pa \wid{p}}{\pa x_3}\ov {\wid{u}_3}\right)dx,\\
		\Rp\int_{D}\g\na \ov{\wid{p}} \cdot (A\wid{\bs{u}})dx 
		&=\g \Rp\int_{D}\left(s_2s_3\ov{\frac{\pa \wid{p}}{\pa x_1}}\wid{u}_1 
		+s_1s_3\ov{\frac{\pa \wid{p}}{\pa x_2}}\wid{u}_2
		+s_1s_2\ov{\frac{\pa \wid{p}}{\pa x_3}}\wid{u}_3\right)dx.
	\end{aligned}
\end{equation*}
This, combining the relation $s_j=1+(\zeta+{\bf i})\al_j$ and the basic identity
\begin{equation*}
	\frac{\pa \wid{p}}{\pa x_j}\ov {\wid{u}_j}-\ov{\frac{\pa \wid{p}}{\pa x_j}}\wid{u}_j 
	= 2{\bf i}\Ip \left(\frac{\pa \wid{p}}{\pa x_j}\ov {\wid{u}_j}\right),j=1,2,3
\end{equation*}
gives
\begin{align}\no
	&\Rp\int_{D}\Big[\g \na \wid{p}\cdot (A\ov {\wid{\bs{u}}})
	-\g\na \ov{\wid{p}} \cdot (A\wid{\bs{u}})\Big]dx\\\no
	=&-2\g \int_{D}\Bigg\{(\al_2+\al_3+2\zeta \al_2\al_3)
	\Ip\left(\frac{\pa \wid{p}}{\pa x_1}\ov {\wid{u}_1}\right)+(\al_1+\al_3
	+2\zeta \al_1\al_3)\Ip\left(\frac{\pa \wid{p}}{\pa x_2}\ov {\wid{u}_2}\right)\\\label{B_1st_order1}
	&+(\al_1+\al_2+2\zeta \al_1\al_2)\Ip\left(\frac{\pa \wid{p}}{\pa x_3}\ov {\wid{u}_3}\right)\Bigg\}dx.
\end{align}
Noting that Cauchy-Schwartz inequality implies
\begin{equation*}
	-2\Ip\left(\frac{\pa \wid{p}}{\pa x_j}\ov {\wid{u}_j}\right) 
	\geq -\Big(\Big\lvert \frac{\pa \wid{p}}{\pa x_j}\Big\rvert^2
	+\Big\lvert \ov {\wid{u}_j}\Big\rvert ^2\Big), j=1,2,3,
\end{equation*}
together with \eqref{B_1st_order1}, we arrive at the following estimate
\begin{align}\no
	&\Rp\int_{D}\Big[\g \na \wid{p}\cdot (A\ov {\wid{\bs{u}}})
	-\g\na \ov{\wid{p}} \cdot (A\wid{\bs{u}})\Big]dx\\\no
	\geq& -\g \int_{D}\Bigg\{(\al_2+\al_3+2\zeta \al_2\al_3) \Big(\Big\lvert \frac{\pa \wid{p}}
	{\pa x_1}\Big\rvert^2+ \Big\lvert \ov {\wid{u}_1}\Big\rvert ^2\Big)+(\al_1+\al_3
	+2\zeta \al_1\al_3)\Big(\Big\lvert \frac{\pa \wid{p}}{\pa x_2}\Big\rvert^2
	+ \Big\lvert \ov {\wid{u}_2}\Big\rvert ^2\Big)\\\no
	&+(\al_1+\al_2+2\zeta \al_1\al_2)\Big(\Big\lvert \frac{\pa \wid{p}}{\pa x_3}\Big\rvert^2
	+ \Big\lvert \ov {\wid{u}_3}\Big\rvert ^2\Big)\Bigg\}dx\\\no
	\geq &-2\g\int_{D}(\al_1+\al_2+\al_3+\zeta \al_2\al_3 +\zeta \al_1\al_3
	+\zeta \al_1\al_2) |\wid{\bs{u}}|^2dx\\\no
	&-\g\int_{D}(\al_2+\al_3+2\zeta \al_2\al_3 )\Big\lvert \frac{\pa \wid{p}}{\pa x_1}\Big\rvert^2 dx
	-\g\int_{D}(\al_1+\al_3+2\zeta \al_1\al_3\Big ]\Big\lvert \frac{\pa \wid{p}}{\pa x_2}\Big\rvert^2dx
	\\\label{biform_B_1st_order}
	&-\g\int_{D}(\al_1+\al_2+2\zeta \al_1\al_2 )\Big\lvert \frac{\pa \wid{p}}{\pa x_3}\Big\rvert^2dx.
\end{align}
It follows from Lemmas \ref{lowbound_s}-\ref{ellip} that
\begin{equation}\label{biform_B_2ed_order}
	\begin{aligned}
		\Rp \mathcal{A}_{D}(\Phi,\Phi)
		\geq\;& C_1\Vert \na \wid{\bs{u}}\Vert_{L^2(D)^{3\times 3}}^2
		+\int_{D}\Big(\frac{\la+\mu}{\la+2\mu}\Big)\frac{(1+\zeta \al_2)(1+\zeta \al_3)}
		{(1+\zeta \al_1)}\Big\lvert \frac{\pa \wid{p}}{\pa x_1}\Big\rvert^2dx
		\\&+\int_{D}\Big(\frac{\la+\mu}{\la+2\mu}\Big)\frac{(1+\zeta \al_1)(1+\zeta \al_3)}
		{(1+\zeta \al_2)}\Big\lvert \frac{\pa \wid{p}}{\pa x_2}\Big\rvert^2dx
		\\&+\int_{D}\Big(\frac{\la+\mu}{\la+2\mu}\Big)\frac{(1+\zeta \al_1)(1+\zeta \al_2)}
		{(1+\zeta \al_3)}\Big\lvert \frac{\pa \wid{p}}{\pa x_3}\Big\rvert^2dx.
	\end{aligned}
\end{equation}
Under the assumption $\ds\frac{\rho \g^2}{\eta^2}\geq 1$, we have
\begin{equation}\label{biform_B_0_order}
	\begin{aligned}
		\Rp \int_{D}\Big(\rho \frac{\g^2}{\eta^2}J\wid{\bs{u}}\cdot \ov {\wid{\bs{u}}}
		+\frac{\g}{\eta \kappa}J\wid{p}\ov{\wid{p}}\Big)dx
		\geq \int_{D}\Rp J |\wid{\bs{u}}|^2 dx+ \frac{\g}{\eta\kappa}\Vert \wid{p}\Vert_{L^2(D)}^2.
	\end{aligned}
\end{equation}
We require the following three inequalities to hold:
\begin{equation*}
	\begin{aligned}
		&\left(\frac{\la+\mu}{\la+2\mu}\right)\frac{(1+\zeta \al_2)
		(1+\zeta \al_3)}{(1+\zeta \al_1)}>\g(\al_2+\al_3+2\zeta \al_2\al_3 ), \\
		&\left(\frac{\la+\mu}{\la+2\mu}\right)\frac{(1+\zeta \al_1)
		(1+\zeta \al_3)}{(1+\zeta \al_2)}>\g(\al_1+\al_3+2\zeta \al_1\al_3 ),\\
		&\left(\frac{\la+\mu}{\la+2\mu}\right)\frac{(1+\zeta \al_1)
		(1+\zeta \al_2)}{(1+\zeta \al_3)}>\g(\al_1+\al_2+2\zeta \al_1\al_2 ),
	\end{aligned}
\end{equation*}
which can be expanded as
\begin{equation*}
	\begin{aligned}
		&\left(\frac{\la+\mu}{\la+2\mu}\right)(1+\zeta \al_2+\zeta \al_3+\zeta^2\al_2\al_3)\\
		>\;&\g\left[ \al_2+ \al_3+\zeta \al_2(\al_1+ \al_3)+\zeta \al_3(\al_1+ \al_2)
		+2\zeta^2\al_1\al_2\al_3\right],\\
		&\left(\frac{\la+\mu}{\la+2\mu}\right)(1+\zeta \al_1+\zeta \al_3+\zeta^2\al_1\al_3)\\
		>\;&\g\left[ \al_1+ \al_3+\zeta \al_1(\al_2+ \al_3)+\zeta \al_3(\al_1+ \al_2)
		+2\zeta^2\al_1\al_2\al_3\right],\\
		&\left(\frac{\la+\mu}{\la+2\mu}\right)(1+\zeta \al_1+\zeta \al_2+\zeta^2\al_1\al_2)\\
		>\;&\g\left[ \al_1+ \al_2+\zeta \al_1(\al_2+ \al_3)+\zeta \al_2(\al_1+ \al_3)
		+2\zeta^2\al_1\al_2\al_3\right].
	\end{aligned}
\end{equation*}
Since functions $\al_j(x_j)\leq \al_0,\;j=1,2,3$, we need the condition
\begin{equation*}
2\g\frac{\la+2\mu}{\la+\mu}\al_0<1 \Longleftrightarrow \al_0<\frac{\la+\mu}{2\g(\la+2\mu)}
\end{equation*}
to meet the requirements. Therefore, we have the estimate
\begin{equation}\label{estimate_C2}
	\begin{aligned}
		&\int_{D}\left[\left(\frac{\la+\mu}{\la+2\mu}\right)\frac{(1+\zeta \al_2)
		(1+\zeta \al_3)}{(1+\zeta \al_1)}-\g(\al_2+\al_3+2\zeta \al_2\al_3 )\right]
		\Big\lvert \frac{\pa \wid{p}}{\pa x_1}\Big\rvert^2dx
		\\+&\int_{D}\left[\left(\frac{\la+\mu}{\la+2\mu}\right)\frac{(1+\zeta \al_1)
		(1+\zeta \al_3)}{(1+\zeta \al_2)}-\g(\al_1+\al_3+2\zeta \al_1\al_3)\right]
		\Big\lvert \frac{\pa \wid{p}}{\pa x_2}\Big\rvert^2dx
		\\+&\int_{D}\left[\left(\frac{\la+\mu}{\la+2\mu}\right)\frac{(1+\zeta \al_1)
		(1+\zeta \al_2)}{(1+\zeta \al_3)}-\g(\al_1+\al_2+2\zeta \al_1\al_2 )\right]
		\Big\lvert \frac{\pa \wid{p}}{\pa x_3}\Big\rvert^2dx\\
		\geq\;& C_2\left(\Big\Vert \frac{\pa \wid{p}}{\pa x_1}\Big\Vert_{L^2(D)}^2
		+\Big\Vert \frac{\pa \wid{p}}{\pa x_2}\Big\Vert_{L^2(D)}^2
		+\Big\Vert \frac{\pa \wid{p}}{\pa x_3}\Big\Vert_{L^2(D)}^2\right)
		=C_2\Vert\na\wid{p}\Vert_{L^2(D)^3}^2.
	\end{aligned}
\end{equation}
Under the assumptions $\zeta^2-1\geq 2\g \zeta$ and $\zeta \geq\sqrt{3}$, we obtain from \eqref{reJ} that
\begin{equation*}
	\begin{aligned}
		\int_{D}\Rp J \lvert \wid{\bs{u}}\rvert^2 dx - 2\g\int_{D}(\al_1+\al_2+\al_3
		+\zeta \al_2\al_3 +\zeta \al_1\al_3+\zeta \al_1\al_2)\lvert \wid{\bs{u}}\rvert^2dx
		\geq C_3 \Vert \wid{\bs{u}}\Vert_{L^2(D)^3}^2.
	\end{aligned}
\end{equation*}
This, combined with \eqref{biform_B_2} and \eqref{biform_B_2ed_order}-\eqref{estimate_C2} 
implies for any $\Phi \in H_0^1(D)^4$
\begin{equation*}
	\begin{aligned}
		\Rp \mathcal{B}_{D}(\Phi,\Phi)\geq\;& C_1\Vert \na \wid{\bs{u}}\Vert_{L^2(D)^{3\times 3}}^2
		+C_2\Vert\na\wid{p}\Vert_{L^2(D)^3}^2+C_3\Vert  \wid{\bs{u}}\Vert_{L^2(D)^3}^2
		+ \frac{\g}{\eta\kappa}\Vert  \wid{p}\Vert_{L^2(D)}^2,
	\end{aligned}
\end{equation*}
Thus for the specific frequency $\ds\om=\frac{\g}{\eta}{\bf i}$, under the assumptions \eqref{pml_pc}, 
the Lax-Milgram lemma shows that a unique solution exists for the variational problem
\begin{equation*}
	\mathcal{B}_{D}(G,\Psi)=-\int_{D}Q_1\cdot \ov{\Psi}dx,\quad \forall\;\Psi \in H_0^1(D)^4
\end{equation*}
Finally, the analytic Fredholm theorem implies the $(I-\mathcal{P}_1^{-1}\mathcal{P}_2)^{-1}$ exists 
for equation \eqref{equi} except possibly for a discrete set of frequencies $\om$.
\end{proof}

\end{theorem}

From the proof for the case $D=\Om_2$ in Theorem \ref{maintheorem}, we can obtain the following existence 
and uniqueness result for the truncated PML problem \eqref{tPML1}-\eqref{tPML4}.
\begin{corollary}
Under the conditions \eqref{maintheorem}, the truncated PML problem \eqref{tPML1}-\eqref{tPML4} admits 
a unique solution in the space $H_0^1(\Om_2)^4$, except possibly for a discrete set of frequencies $\om$.
\end{corollary}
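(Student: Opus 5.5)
The corollary should follow from Theorem \ref{maintheorem} with $D=\Om_2$; the work is to check that $\Om_2$ is admissible and that the inf-sup estimate yields existence, not just uniqueness. The domain $\Om_2=B_2\ba\ov{\Om}$ has piecewise smooth boundary: a smooth inner boundary $\pa\Om$ and a cuboid outer boundary $\pa B_2$. It is explicitly covered by the hypothesis ``$D=\Om_2$'' in Theorem \ref{maintheorem}, and the regularity result of \cite{Savare1998} was invoked there precisely to handle the corners of $\pa B_2$.

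The boundary conditions \eqref{tPML3}--\eqref{tPML4} are exactly the statement $(\wid{\bs{u}}^{\top},\wid{p})^{\top}\in H_0^1(\Om_2)^4$. Testing \eqref{tPML1}--\eqref{tPML2} against $\Psi\in H_0^1(\Om_2)^4$ and integrating by parts gives the weak formulation
\[
\mathcal{B}_{\Om_2}(\Phi,\Psi)=-\int_{\Om_2}({\bf Q}^{\top},0)^{\top}\cdot\ov{\Psi}\,dx \quad \forall\;\Psi\in H_0^1(\Om_2)^4 .
\]
This is problem \eqref{pml_variation} with $Q_1=({\bf Q}^{\top},0)^{\top}$. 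Since ${\bf Q}\in H^1$ has compact support, $Q_1$ lies in $L^2(\Om_2)^4$ and hence in the dual of $H_0^1(\Om_2)^4$. No boundary terms arise from integration by parts, because $\Psi$ vanishes on $\pa\Om\cup\pa B_2$.

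For existence, I would not rely on the inf-sup estimate alone; I would reuse the operator reformulation from the proof of Theorem \ref{maintheorem}. The weak problem is equivalent to $(\mathcal{P}_1-\mathcal{P}_2)\Phi=G$, that is, $(I-\mathcal{P}_1^{-1}\mathcal{P}_2)\Phi=\mathcal{P}_1^{-1}G$. Here $\mathcal{P}_1^{-1}\mathcal{P}_2$ is compact and depends analytically on $\om$: the coefficients $\rho\om^2$, $q={\bf i}\om/\kappa$ and ${\bf i}\om\eta$ are polynomial in $\om$, and $\mathcal{P}_1$ does not depend on $\om$. This family is invertible at $\om=\frac{\g}{\eta}{\bf i}$ under \eqref{pml_pc}, by the coercivity established there. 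The analytic Fredholm theorem therefore gives a bounded inverse for every $\om$ outside a discrete set. Consequently $\Phi$ exists, is unique, and depends continuously on ${\bf Q}$. Uniqueness can also be read off directly from the inf-sup inequality of Theorem \ref{maintheorem}, since a difference of two solutions satisfies $\mathcal{B}_{\Om_2}(\Phi,\cdot)=0$.

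The main obstacle is bookkeeping rather than new analysis. The parameter conditions \eqref{pml_pc} must be imposed on the actual PML data. In particular, $\al_0<\frac{\la+\mu}{2\g(\la+2\mu)}$ has to hold for the layer $\Om_{\PML}$, so that the coercivity at the auxiliary frequency $\om=\frac{\g}{\eta}{\bf i}$ holds uniformly on $\Om_2$. I would also note that the complex auxiliary frequency must lie in the connected domain of analyticity used by the analytic Fredholm theorem. The exceptional set is then the discrete set produced by the theorem, intersected with the physical (real, positive) frequencies.
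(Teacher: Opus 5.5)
Your proposal is correct and follows the paper's own route: the paper obtains the corollary directly from the proof of Theorem \ref{maintheorem} with $D=\Om_2$. That proof uses the Fredholm reformulation $(I-\mathcal{P}_1^{-1}\mathcal{P}_2)\Phi=\mathcal{P}_1^{-1}G$, coercivity of $\mathcal{B}_{\Om_2}$ at the auxiliary frequency $\om=\frac{\g}{\eta}{\bf i}$, and the analytic Fredholm theorem. Your extra checks make explicit what the paper leaves implicit: identifying $Q_1=({\bf Q}^{\top},0)^{\top}$, noting that $\mathcal{P}_1$ is $\om$-independent because $z=\zeta+{\bf i}$ is fixed, and noting that $\mathcal{P}_2(\om)$ is polynomial and hence entire in $\om$.
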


In what follows, we consider a PDE system in the PML layer for the convergence analysis of the PML method:
\begin{subequations}
	\begin{align}\label{PMLlayerstabilityequa1}
		\na \cdot( \wid{\sig}(\wid{\bs{u}})A)+\rho \om^2 J \wid{\bs{u}}-\g A \na\wid{ p}=0 
		\quad &{\rm in} \;\Om_{\PML} , \\ \label{PMLlayerstabilityequa22}
		\na \cdot (K\na\wid{ p})+qJ \wid{p}+{\bf{i}}\om \eta \na \cdot(A\wid{ \bs{u}})=0 
		\quad &{\rm in} \;\Om_{\PML},\\\label{PMLlayerconditionouter}
		(\wid{\bs{u}}^{\top},\wid{p})^{\top} =F \quad &{\rm on} \; \pa B_2,
		\\ \label{PMLlayerconditioninner}
		(\wid{\bs{u}}^{\top},\wid{p})^{\top}=0 \quad &{\rm on} \; \pa B_1.
	\end{align}
\end{subequations}
Then the variational formulation of \eqref{PMLlayerstabilityequa1}-\eqref{PMLlayerconditioninner} 
is as follows: given $F\in H^{1/2}(\pa B_2)^4$, find $(\wid{\bs{u}}^{\top},\wid{p})^{\top}\in 
H_{\pa B_1}^1(\Om_{\PML})^4=:\{\Phi\in H^1(\Om_{\PML})^4: \Phi=0\;{\rm on}\;\pa B_1\}$ such that
\begin{equation}\label{varform_BPML}
	\mathcal{B}_{\Om_\PML}((\wid{\bs{u}}^{\top},\wid{p})^{\top},\Psi)=0, \quad\forall\;\Psi\in H_0^1(\Om_{\PML})^4.
\end{equation}

\begin{theorem}\label{pmllayer}
Let $D=\Om_{\PML}$ and assume the parameters satisfy the constraints \eqref{pml_pc}. 
For given $F\in H^{1/2}(\pa B_2)^4$, the system \eqref{PMLlayerstabilityequa1}-\eqref{PMLlayerconditioninner}
admits a unique weak solution $(\wid{\bs{u}}^{\top},\wid{p})^{\top}\in H^{1}_{\pa B_1}(\Om_{\PML})^4$ 
such that $(\wid{\bs{u}}^{\top},\wid{p})^{\top}=F$ on $\pa B_2$ except possibly for a discrete set of 
frequencies $\om$. Moreover,  the following stability estimate holds for the system 
\eqref {PMLlayerstabilityequa1}-\eqref {PMLlayerconditioninner}
\begin{equation}\label{stability_pmllayer}
	\Vert (\wid{\bs{u}}^{\top},\wid{p})^{\top} \Vert_{H^1(\Om_{\PML})^4}\leq C(d)\Vert F \Vert_{H^{1/2}(\pa B_2)^4},
\end{equation}
where the constant $C(d)$ depends on the thickness $d$ and grows at most polynomially in $d$.
\begin{proof}
Firstly, we prove the existence and uniqueness of solution to the problem 
\eqref{PMLlayerstabilityequa1}-\eqref{PMLlayerconditioninner}.
Let $\chi \in C_0^{\infty}(\R^3)$ be a cut-off function such that $\chi=0$
on $B_{d/2}=\{(x_1,x_2,x_3)^{\top} \in \R^3:\lvert x_j \rvert <l_j+d_j/2,j=1,2,3\}$ 
and $\chi=1$ near the boundary $\pa B_2$.

Assume $(\wid{\bs{u}}^{\top},\wid{p})^{\top}\in H^{1}(\Om_{\PML})^4$ is the solution of 
the variational problem \eqref{varform_BPML}, then $(\wid{U}^{\top},\wid{P})^{\top}
=(\wid{\bs{u}}^{\top},\wid{p})^{\top}-\chi F\in H_0^{1}(\Om_{\PML})^4$ satisfies 
the following variational formulation
\begin{equation}\label{varform_PML}
	\mathcal{B}_{\Om_{\PML}}((\wid{U}^{\top},\wid{P})^{\top},\Psi) 
	= -\mathcal{B}_{\Om_{\PML}}(\chi F,\Psi),\quad \Psi\in H_0^{1}(\Om_{\PML})^4
\end{equation}
where $\mathcal{B}_{\Om_{\PML}}(\cdot,\cdot)$ is defined in \eqref{biform_B}. 
Similar discussion with the proof of Theorem \ref{maintheorem} shows that the 
variational problem \eqref{varform_PML} has a unique weak solution in $H_0^1(\Om_\PML)^4$ 
except possibly for a discrete set of frequencies $\om$ under the constraints \eqref{pml_pc}. 
Consequently, the inf-sup condition holds
\begin{equation}\label{infsup_pmllayer}
	\sup_{\Psi \in H_0^1(\Om_{\PML})^4\ba\{0\}}\frac{\lvert \mathcal{B}_{\Om_{\PML}}(\Phi,\Psi) \rvert}
	{\Vert \Psi \Vert_{H_0^1(\Om_{\PML})^4}} \geq C \Vert \Phi \Vert_{H^1(\Om_{\PML})^4},
	\quad\forall\;\Phi \in H_0^1(\Om_{\PML})^4.
\end{equation}
The desired existence and uniqueness result of solution $(\wid{\bs{u}}^{\top},\wid{p})^{\top}$ 
for the problem \eqref{PMLlayerstabilityequa1}-\eqref{PMLlayerconditioninner} is thus obtained.

We now establish the stability estimate for the problem \eqref{PMLlayerstabilityequa1}-\eqref{PMLlayerconditioninner}.
For the variational problem \eqref{varform_PML}, it follows from the inf-sup condition \eqref{infsup_pmllayer} and 
the continuity of $\mathcal{B}_{\Om_{\PML}}(\cdot,\cdot)$ that
\begin{equation*}
	\begin{aligned}
		\Vert (\wid{U}^{\top},\wid{P})^{\top} \Vert_{H^1(\Om_{\PML})^4} 
		&\leq C\sup_{\Psi \in H_0^1(\Om_{\PML})^4\ba\{0\}}\frac{\lvert \mathcal{B}_{\Om_{\PML}}
		((\wid{U}^{\top},\wid{P})^{\top},\Psi) \rvert}{\Vert \Psi \Vert_{H_0^1(\Om_{\PML})^4}}\\
		 &=C\sup_{\Psi \in H_0^1(\Om_{\PML})^4\ba\{0\}}\frac{\lvert \mathcal{B}_{\Om_{\PML}}
		 (\chi F,\Psi) \rvert}{\Vert \Psi \Vert_{H_0^1(\Om_{\PML})^4}}\\
		 &\leq C\sup\limits_{\Psi \in H_0^1(\Om_{\PML})^4\ba\{0\}}\frac{\Vert \chi F \Vert_{H^1(\Om_{\PML})^4}
		 \Vert \Psi \Vert_{H_0^1(\Om_{\PML})^4}}{\Vert \Psi \Vert_{H_0^1(\Om_{\PML})^4}}\\
		 &\leq C(d)\Vert F \Vert_{H^{1/2}(\pa B_2)^4},
	\end{aligned}	 	
\end{equation*}
which implies 		
\begin{equation*}
	\Vert (\wid{\bs{u}}^{\top},\wid{p})^{\top}-\chi F \Vert_{H^1(\Om_{\PML})^4}
	\leq C(d)\Vert F \Vert_{H^{1/2}(\pa B_2)^4}.
\end{equation*}
The desired estimate \eqref{stability_pmllayer} then follows from the triangle inequality.

\end{proof}
\end{theorem}

\section{Convergence analysis of the PML method}
\setcounter{equation}{0}

In this section, we make the following assumption on the thickness $d$ of the PML layer:
\begin{equation}\label{assum_d}
	\ov{l_j}-l_j\leq \frac{d}{2},j=1,2,3,\;{\rm and}\;d\;\text{has a positive lower bound}.
\end{equation}

To study the convergence of the uniaxial PML method, we introduce the DtN operator 
$\hat{\mathcal{N}}:H^{1/2}(\pa B_1)^4\to H^{-1/2}(\pa B_1)^4$ associated with the 
truncated PML problem \eqref{tPML1}-\eqref{tPML4}.	Given $G\in H^{1/2}(\pa B_1)^4$, define
\begin{equation}\label{dtnmap_pml}
	\hat{\mathcal{N}}G=\wid{\mathcal{R}}(\wid{\bs{u}}^{\top},\wid{p})^{\top},
\end{equation}
where $(\wid{\bs{u}}^{\top},\wid{p})^{\top}$ satisfies the equations in the PML layer
\begin{subequations}
	\begin{align}\label{dtn_pml1}
		\na \cdot( \wid{\sig}(\wid{\bs{u}})A)+\rho \om^2 J \wid{\bs{u}}-\g A \na\wid{ p}=0 
		\quad &{\rm in} \;\Om_{\PML} , \\ \label{dtn_pml2}
		\na \cdot (K\na\wid{ p})+qJ \wid{p}+{\bf{i}}\om \eta \na \cdot(A\wid{ \bs{u}})=0 
		\quad &{\rm in} \;\Om_{\PML},\\\label{dtn_pml3}
		(\wid{\bs{u}}^{\top},\wid{p})^{\top} =0 \quad &{\rm on} \; \pa B_2,\\ \label{dtn_pml4}
		(\wid{\bs{u}}^{\top},\wid{p})^{\top}=G \quad &{\rm on} \; \pa B_1.
	\end{align}
\end{subequations}
Here, $\wid{\mathcal{R}}$ is defined by
\begin{equation}
	\wid{\mathcal{R}}(\wid{\bs{u}}^{\top},\wid{p})^{\top}=
	\begin{bmatrix}
		\wid{T} & -\g \wid{\nu } \\
		0 &  \pa_{\wid{\nu}}
	\end{bmatrix}(\wid{\bs{u}}^{\top},\wid{p})^{\top}\quad {\rm on}\;\pa B_1
\end{equation}
where $\wid{\nu} =A\nu$, $\pa_{\wid{\nu}}\bs{u}=\nu \cdot (A\na \bs{u})$, and 
$\wid{T} \bs{u}=\wid{\sig}(\wid{\bs{u}})A \nu$. Observing that $A$ and $B$ reduce to 
the identity matrices on $\pa B_1$, thereby $\wid{\sig}(\wid{\bs{u}})=\sig(\wid{\bs{u}})$, 
hence we obtain $\wid{\mathcal{R}}(\wid{\bs{u}}^{\top},\wid{p})^{\top}
=\mathcal{R}(\wid{\bs{u}}^{\top},\wid{p})^{\top}$.

For $x\in\pa B_2,y\in\pa B_1$, it follows from \eqref{pmld} and the assumption \eqref{assum_d} that
\begin{equation}\label{pmld1}
	\Ip d(\wid{x},\wid{y})\geq \frac{\ds\sum_{j=1}^3\left(\lvert x_j-y_j\rvert 
	\left\lvert\int_{y_j}^{x_j}\al_j(t)dt\right\rvert+\zeta\left\lvert \int_{y_j}^{x_j}\al_j(t)dt
	\right\rvert^2\right)}{(1+\zeta\al_0)\sqrt{(x_1-y_1)^2+(x_2-y_2)^2+(x_3-y_3)^2}}\geq r_0 \al_0 d,
\end{equation}
where $r_0=\ds\frac{3}{4}\frac{d}{\sqrt{\sum_{j=1}^3(2l_j+d_j)^2}}$.

For $v\in W^{1,\infty }(\pa B_2) \cap H^{1/2}(\pa B_2)$, from the $H^{1/2}$-norm on $\pa B_i\;(i=1,2)$
\begin{equation*}
	\|v\|_{H^{1/2}(\pa B_i)} = \left( \|v\|_{L^2(\pa B_i)}^2 + \int_{\pa B_i} \int_{\pa B_i} 
	\frac{|v(x) - v(x')|^2}{|x - x'|^3} \, ds(x) \,ds(x') \right)^{\frac{1}{2}},
\end{equation*}
we have
\begin{equation*}
	\Vert v\Vert_{ H^{1/2}(\pa B_i)}\leq C\Big(\vert \pa B_i\vert^{1/2}\Vert v\Vert_{L^{\infty}(\pa B_i)}
	+\vert \pa B_i\vert^{1/2}{\rm diam}(B_i)^{1/2}\Vert\na v\Vert_{L^{\infty}(\pa B_i)}\Big),
\end{equation*}
where ${\rm diam}(B_i)$ denotes the diameter of domain $B_i$. If $d$ has a positive lower bound, we have
\begin{equation}\label{H1/2norm}
	\Vert v\Vert_{ H^{1/2}(\pa B_i)}\leq C\Big(d\Vert v\Vert_{L^{\infty}(\pa B_i)}
	+d^{3/2}\Vert\na v\Vert_{L^{\infty}(\pa B_i)}\Big).
\end{equation}

The following lemma on the decay property of the PML extension will play an important role in the 
convergence analysis of the PML method.
\begin{lemma}\label{thermoelasticPMLpotential}
For $\bs{f}=(f_1,f_2,f_3,f_4)^{\top}\in H^{1/2}(\pa B_1)^4$, under the assumption \eqref{assum_d} 
we have the estimate for the PML extension
\begin{equation}\label{E_pml_estimate}
	\Vert \wid{E}(\bs{f})\Vert_{{ H^{1/2}(\pa B_2)}^4}\leq C\Big[(1+\zeta \al_0)^2+\al_0^2\Big]^{\frac{m}{2}}
	P(d)e^{-r_0\al_0d}\Vert \bs{f}\Vert_{{H^{1/2}(\pa B_1)^4}},
\end{equation}
where $m>0$ is an integer, $r_0$ appears in \eqref{pmld1} and $P(d)$ is a function that grows at most polynomially.
\end{lemma}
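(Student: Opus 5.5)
The plan is to use the integral representation \eqref{pmlextension}, $\wid{E}(\bs{f})=\frac12[\wid{\Psi}_{SL}(\mathcal{N}\bs{f})-\wid{\Psi}_{DL}(\bs{f})]$. For $x\in\pa B_2$ the point $x$ is separated from $\pa B_1$, since $|x-y|\geq \min_j d_j$ for every $y\in\pa B_1$, and $d$ has a positive lower bound by \eqref{assum_d}. Hence the kernels $\wid{\Phi}(x-y,\om)$ and $[\mathcal{S}(\pa_y,\nu(y),\om)\wid{\Phi}^{\top}(x-y,\om)]^{\top}$ are smooth in $(x,y)$ on $\pa B_2\times\pa B_1$. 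I will bound $\wid{E}(\bs{f})$ and its tangential gradient in $L^\infty(\pa B_2)$, and then convert these bounds to an $H^{1/2}(\pa B_2)$ bound through \eqref{H1/2norm}. The boundedness of the DtN map $\mathcal{N}:H^{1/2}(\pa B_1)^4\to H^{-1/2}(\pa B_1)^4$ gives $\|\mathcal{N}\bs{f}\|_{H^{-1/2}(\pa B_1)^4}\le C\|\bs{f}\|_{H^{1/2}(\pa B_1)^4}$.

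\textbf{Step 1 (pointwise kernel bounds).} For $x\in\pa B_2$ and $y\in\pa B_1$, apply Lemma \ref{pmlfse} to every derivative $\pa_x^a\pa_y^b\wid{\Phi}_{ij}$ with $|a|\le 1$ and $|b|\le 2$. The double-layer kernel involves one $y$-derivative. Controlling the $H^{1/2}(\pa B_1)$ duality (see Step 2) requires one further tangential $y$-derivative, and the gradient on $\pa B_2$ requires one $x$-derivative. Because $|x-y|$ is bounded below, the factor $1/|x-y|+1/|x-y|^{m_2}$ is bounded by a constant. The exponential factor satisfies $e^{-\La\,\Ip d(\wid{x},\wid{y})}\le e^{-\La r_0\al_0 d}$ by \eqref{pmld1}. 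This yields
\begin{equation*}
\sup_{x\in\pa B_2}\big\|\pa_x^a\pa_y^b\wid{\Phi}(x-\cdot,\om)\big\|_{W^{1,\infty}(\pa B_1)}\le C[(1+\zeta\al_0)^2+\al_0^2]^{m/2}e^{-\La r_0\al_0 d},
\end{equation*}
and similarly for the double-layer kernel. The constant $\La$ can be absorbed by rescaling $r_0$, or simply kept, which is consistent with the stated form up to constants.

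\textbf{Step 2 (duality).} For fixed $x$, write $\wid{\Psi}_{SL}(\mathcal{N}\bs{f})(x)=\langle \mathcal{N}\bs{f},\ov{\wid{\Phi}(x-\cdot)}\rangle_{\pa B_1}$. Then
\begin{equation*}
|\wid{\Psi}_{SL}(\mathcal{N}\bs{f})(x)|\le \|\mathcal{N}\bs{f}\|_{H^{-1/2}}\,\|\wid{\Phi}(x-\cdot)\|_{H^{1/2}(\pa B_1)}.
\end{equation*}
Apply \eqref{H1/2norm} on $\pa B_1$ to the smooth kernel to bound the last factor by its $W^{1,\infty}(\pa B_1)$ norm times a polynomial in $d$. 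The double-layer term is handled the same way, with $|\langle\cdot,\bs{f}\rangle|\le \|\cdot\|_{L^2}\|\bs{f}\|_{L^2}$. The same argument applied to $\na_x$ of both potentials, with differentiation under the integral justified by smoothness, bounds $\|\na\wid{E}(\bs{f})\|_{L^\infty(\pa B_2)}$.

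\textbf{Step 3.} Insert these bounds into \eqref{H1/2norm} on $\pa B_2$. This produces the factors $d$ and $d^{3/2}$, which together with the polynomial factors from Step 2 make up $P(d)$, and gives \eqref{E_pml_estimate}.

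The main obstacle is Step 1. There one has to check that Lemma \ref{pmlfse} indeed applies uniformly: the lower bound $|d(\wid{x},\wid{y})|\ge|x-y|$ is needed to keep the large-distance case controlled, and the exponent $\Ip\la_l\Rp d$ must be nonnegative. I would handle the latter by noting that $\Rp d(\wid{x},\wid{y})\ge 0$ follows from the argument behind \eqref{lower_bound} under $\zeta\ge1$. A further point is the bookkeeping of the powers of $(1+\zeta\al_0)^2+\al_0^2$ produced by each derivative, since derivatives of $\wid{x}_j$ bring factors $s_j$. These only change the integer $m$.
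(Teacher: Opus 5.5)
Your proposal follows essentially the same route as the paper: the representation \eqref{pmlextension}, then \eqref{H1/2norm} on $\pa B_2$ and again on $\pa B_1$, then the $H^{-1/2}$--$H^{1/2}$ duality together with boundedness of $\mathcal{N}$ for the single layer, and finally Lemma \ref{pmlfse} combined with \eqref{pmld1}. Your $L^2$ pairing of the double-layer kernel with $\bs{f}$ is in fact more careful than the paper's treatment, which estimates that kernel in $H^{1/2}(\pa B_1)$ through \eqref{H1/2norm} even though $\nu(y)$, and hence the kernel, jumps across the edges of the cuboid.

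One correction to your remark on $\La$: when $\La<1$ the factor $e^{(1-\La)r_0\al_0 d}$ cannot be absorbed into $C$. So what both you and the paper actually prove is the estimate with $e^{-\La r_0\al_0 d}$, that is, with $r_0$ replaced by $\La r_0$.
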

\begin{proof}
Let $M_{(k)}$ and $M^{(k)}$ be the $k$-th row vector and column vector, respectively, of matrix $M$ over $\mat{C}$. 
For $x\in\pa B_2$, the $k$-th component of the PML extension $\wid{E}(\bs{f})$ 
(see \eqref{pmlextension} for its definition) is given by
\begin{equation}\label{kcomponent}
	\wid{E}(\bs{f})_k(x)=\frac{1}{2}\Big[\wid{\Psi}_{SL}(\mathcal{N}\bs{f})_k(x)-\wid{\Psi}_{DL}(\bs{f})_k(x)\Big],
\end{equation}
where
\begin{equation*}
	\begin{aligned}
		\wid{\Psi}_{SL}(\mathcal{N}\bs{f})_k(x)&=\int_{\pa B_1} \wid{\Phi}(x-y,\om)_{(k)}(\mathcal{N}\bs{f})(y)ds(y),\\
		\wid{\Psi}_{DL}(\bs{f})_k(x)&=\int_{\pa B_1}[\mathcal{S}(\pa_y,\nu(y),\om)
		\wid{\Phi}^{\top}(x-y,\om)]^{(k)}\bs{f}(y)ds(y).
	\end{aligned}
\end{equation*}
By using the estimate for $H^{1/2}$-norm \eqref{H1/2norm} on $\pa B_2$, we have
\begin{equation}\label{SL1}
	\begin{aligned}
		\Vert \wid{\Psi}_{SL}(\mathcal{N}\bs{f})_k\Vert_{ H^{1/2}(\pa B_2)}
		\leq\;&Cd\Big\Vert\int_{\pa B_1}\wid{\Phi}(x-y,\om)_{(k)}(\mathcal{N}{\bs{f}})(y)ds(y)
		\Big\Vert_{L^{\infty}(\pa B_2)}\\
		&+Cd^{3/2}\Big\Vert\na_x \int_{\pa B_1}\wid{\Phi}(x-y,\om)_{(k)}(\mathcal{N}{\bs{f}})(y)ds(y)
		\Big\Vert_{L^{\infty}(\pa B_2)},
	\end{aligned}
\end{equation}
and 
\begin{align}\label{DL1}
	\Vert\wid{\Psi}_{DL}(\bs{f})_k\Vert_{ H^{1/2}(\pa B_2)}
	\leq\; & Cd\Big\Vert \int_{\pa B_1}\Big[\mathcal{S}(\pa_y,\nu(y),\om)\wid{\Phi}^{\top}(x-y,\om)\Big]^{(k)}
	\bs{f}(y)ds(y)\Big\Vert_{L^{\infty}(\pa B_2)}\\\no
	&+Cd^{3/2}\Big\Vert\na_x\int_{\pa B_1}\Big[\mathcal{S}(\pa_y,\nu(y),\om)\wid{\Phi}^{\top}(x-y,\om)\Big]^{(k)}
	\bs{f}(y)ds(y)\Big\Vert_{L^{\infty}(\pa B_2)}.
\end{align}
By using the dual pair $\langle\cdot,\cdot\rangle_{H^{-1/2}(\pa B_1)\times H^{1/2}(\pa B_1)}$ and 
taking the maximum on $\pa B_2$ for \eqref{SL1} yield
\begin{align}\label{SL3}
	\Vert \wid{\Psi}_{SL}(\mathcal{N}\bs{f})_k\Vert_{ H^{1/2}(\pa B_2)} 
	\leq\;&Cd\max\limits_{x\in\pa B_2}\sum_{j=1}^4\Vert\wid{\Phi}(x-\cdot,\om)_{kj}\Vert_{{H^{1/2}(\pa B_1)}}
	\Vert f_j\Vert_{{H^{1/2}(\pa B_1)}}\\\no
	&+Cd^{3/2}\max\limits_{x\in\pa B_2}\max\limits_{t\in\{1,2,3\}}\sum_{j=1}^4\Big\Vert\frac{\pa }{\pa x_t}
	\wid{\Phi}(x-\cdot,\om)_{kj}\Big\Vert_{{H^{1/2}(\pa B_1)}}\Vert f_j\Vert_{{H^{1/2}(\pa B_1)}},
\end{align}
where the boundness of DtN operator $\mathcal{N}$ is used. The estimate for \eqref{DL1} can be similarly
obtained in combination with the fact that $H^{-1/2}(\pa B_1)$-norm is controlled by $H^{1/2}(\pa B_1)$-norm
for smooth functions:
\begin{align}\label{DL3}
	\Vert\wid{\Psi}_{DL}(\bs{f})_k\Vert_{ H^{1/2}(\pa B_2)}
	\leq\; & Cd\max\limits_{x\in\pa B_1}\sum_{i=1}^4\sum_{a=1}^4\Vert \mathcal{S}_{ia}
	\wid{\Phi}(x-\cdot,\om)_{ka}\Vert_{{H^{1/2}(\pa B_1)}}\Vert f_i\Vert_{{H^{1/2}(\pa B_1)}}\\\no
	&+Cd^{3/2}\max\limits_{x\in\pa B_1}\max\limits_{t\in\{1,2,3\}}\sum_{i=1}^4\sum_{a=1}^4
	\Big\Vert \frac{\pa}{\pa x_t}\mathcal{S}_{ia}\wid{\Phi}(x-\cdot,\om)_{ka}\Big\Vert_{{H^{1/2}(\pa B_1)}}
	\Vert f_i\Vert_{{H^{1/2}(\pa B_1)}}.
\end{align}
Using the $H^{1/2}$-norm estimate \eqref{H1/2norm} on $\pa B_1$, we obtain the following bounds 
for \eqref{SL3} and \eqref{DL3}
\begin{equation*}
	\begin{aligned}
		\Vert \wid{\Psi}_{SL}(\mathcal{N}\bs{f})_k\Vert_{ H^{1/2}(\pa B_2)} 
		\leq\; &Cd\Vert\bs{f}\Vert_{{H^{1/2}(\pa B_1)^4}}\sum_{j=1}^4\Big( d\Vert \wid{\Phi}(x-y,\om)_{kj}
		\Vert_{L^{\infty}(\pa B_1\times\pa B_2)}\\
		&+d^{3/2}\Vert\na_y \wid{\Phi}(x-y,\om)_{kj}\Vert_{L^{\infty}(\pa B_1\times\pa B_2)}\Big)\\
		&+Cd^{3/2}\Vert\bs{f}\Vert_{{H^{1/2}(\pa B_1)^4}}\max\limits_{t\in\{1,2,3\}}\sum_{j=1}^4 
		\Big(d\Big\Vert \frac{\pa }{\pa x_t}\wid{\Phi}(x-y,\om)_{kj}\Big\Vert_{L^{\infty}(\pa B_1\times\pa B_2)}\\
		&+d^{3/2}\Big\Vert\na_y \frac{\pa }{\pa x_t}\wid{\Phi}(x-y,\om)_{kj}\Big\Vert_{L^{\infty}(\pa B_1\times\pa B_2)}\Big),
	\end{aligned}
\end{equation*}
and
\begin{equation*}
	\begin{aligned}
		\Vert\wid{\Psi}_{DL}(\bs{f})_k\Vert_{ H^{1/2}(\pa B_2)} 
		\leq\; &Cd\Vert\bs{f}\Vert_{{H^{1/2}(\pa B_1)^4}}\sum_{i=1}^4\sum_{a=1}^3 
		\Big(d\Vert\mathcal{S}_{ia}\wid{\Phi}(x-y,\om)_{ka} \Vert_{L^{\infty}(\pa B_1\times\pa B_2)}\\
		&+d^{3/2}\Vert\na_y \mathcal{S}_{ia}\wid{\Phi}(x-y,\om)_{ka}\Vert_{L^{\infty}(\pa B_1\times\pa B_2)}\Big)\\
		&+Cd^{3/2}\Vert\bs{f}\Vert_{{H^{1/2}(\pa B_1)^4}}\max\limits_{t\in\{1,2,3\}}
		\sum_{i=1}^4\sum_{a=1}^3\Big( d\Big\Vert \frac{\pa}{\pa x_t}\mathcal{S}_{ia}
		\wid{\Phi}(x-y,\om)_{ka}\Big\Vert_{L^{\infty}(\pa B_1\times\pa B_2)}\\
		&+d^{3/2}\Big\Vert\na_y \frac{\pa}{\pa x_t}\mathcal{S}_{ia}\wid{\Phi}(x-y,\om)_{ka}
		\Big\Vert_{L^{\infty}(\pa B_1\times\pa B_2)}\Big).
	\end{aligned}
\end{equation*}
This, combined with the representation \eqref{kcomponent} and Lemma \ref{pmlfse} obtains the desired estimate \eqref{pmlfse}. 
The proof is thus complete.
	
\end{proof}

Based on Lemma \ref{thermoelasticPMLpotential}, we have the error estimate between DtN operators
$\mathcal{N}$ and $\hat{\mathcal{N}}$.
\begin{lemma}\label{thermoleasticDtNerror}
For $\bs{f}\in H^{1/2}(\pa B_1)^4$, we have
\begin{equation}\label{dtn_error}
	\Vert (\mathcal{N}-\hat{\mathcal{N}})\bs{f}\Vert_{{ H^{-1/2}(\pa B_1)}^4}
	\leq C\Big[(1+\zeta \al_0)^2+\al_0^2\Big]^{\frac{m}{2}}P(d)e^{-r_0\al_0d}\Vert\bs{f}\Vert_{{H^{1/2}(\pa B_1)^4}}, 
\end{equation}
where $m>0$ is an integer, $r_0$ appears in \eqref{pmld1} and $P(d)$ is a function that grows at most polynomially.
\end{lemma}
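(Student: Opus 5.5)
The plan is to compare the two DtN operators through the PML extension $\wid{E}(\bs{f})$. Fix $\bs{f}\in H^{1/2}(\pa B_1)^4$. By \eqref{pmlextension}, $\wid{E}(\bs{f})$ solves the stretched system \eqref{eq_pml}, hence \eqref{equa1}, in $\Om_{\PML}$, with $\wid{E}(\bs{f})=\bs{f}$ on $\pa B_1$. Since $s_j\equiv 1$ on and near $\pa B_1$, we have $\wid{x}=x$ there, so $\wid{\mathcal{R}}\wid{E}(\bs{f})=\mathcal{R}E(\bs{f})=\mathcal{N}\bs{f}$ on $\pa B_1$. On the other hand, $\hat{\mathcal{N}}\bs{f}=\wid{\mathcal{R}}\wid{W}$, where $\wid{W}$ solves \eqref{dtn_pml1}--\eqref{dtn_pml4} with $G=\bs{f}$. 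The difference $\wid{V}:=\wid{E}(\bs{f})-\wid{W}$ therefore solves the homogeneous layer problem \eqref{PMLlayerstabilityequa1}--\eqref{PMLlayerconditioninner} with $\wid{V}=0$ on $\pa B_1$ and $\wid{V}=F:=\wid{E}(\bs{f})|_{\pa B_2}$ on $\pa B_2$. It also satisfies $(\mathcal{N}-\hat{\mathcal{N}})\bs{f}=\wid{\mathcal{R}}\wid{V}$ on $\pa B_1$. This reduces the lemma to bounding the conormal trace of $\wid{V}$ on $\pa B_1$ by its Dirichlet data on $\pa B_2$.

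For the reduced problem, Theorem \ref{pmllayer} (outside the exceptional discrete frequency set) gives
\[
\Vert\wid{V}\Vert_{H^1(\Om_{\PML})^4}\le C(d)\Vert F\Vert_{H^{1/2}(\pa B_2)^4},
\]
with $C(d)$ polynomial in $d$. We then bound $\Vert F\Vert_{H^{1/2}(\pa B_2)^4}$ directly by Lemma \ref{thermoelasticPMLpotential}. This produces the factor $[(1+\zeta\al_0)^2+\al_0^2]^{m/2}P(d)e^{-r_0\al_0 d}\Vert\bs{f}\Vert_{H^{1/2}(\pa B_1)^4}$, and the polynomial $C(d)$ is absorbed into $P(d)$.

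The remaining step is to estimate $\Vert\wid{\mathcal{R}}\wid{V}\Vert_{H^{-1/2}(\pa B_1)^4}$ by $\Vert\wid{V}\Vert_{H^1(\Om_{\PML})^4}$. We do this weakly, through Green's formula. For $\bs{g}\in H^{1/2}(\pa B_1)^4$, choose a lifting $\Psi_{\bs{g}}\in H^1(\Om_{\PML})^4$ with $\Psi_{\bs{g}}=\bs{g}$ on $\pa B_1$, $\Psi_{\bs{g}}=0$ on $\pa B_2$, supported in a fixed neighborhood of $\pa B_1$ (independent of $d$, since $d$ is bounded below), and satisfying $\Vert\Psi_{\bs{g}}\Vert_{H^1}\le C\Vert\bs{g}\Vert_{H^{1/2}(\pa B_1)}$. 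Because $\wid{V}$ satisfies the PML equations, integration by parts gives, up to sign and the orientation of the normal,
\[
\langle\wid{\mathcal{R}}\wid{V},\bs{g}\rangle_{\pa B_1}=\mathcal{B}_{\Om_{\PML}}(\wid{V},\Psi_{\bs{g}}).
\]
The coupling term $-\g\wid{\nu}\,\wid{p}$ in $\wid{\mathcal{R}}$ is exactly the boundary term produced by $-\g\wid{p}\na\cdot(A\ov{\bs{u}}')$, so it is accounted for. Continuity of $\mathcal{B}_{\Om_{\PML}}$ (the coefficients $A,K,J$ are bounded by powers of $(1+\zeta\al_0)^2+\al_0^2$, which we absorb into the power $m$) then yields $\Vert\wid{\mathcal{R}}\wid{V}\Vert_{H^{-1/2}}\le C\Vert\wid{V}\Vert_{H^1(\Om_{\PML})}$. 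Taking the supremum over $\bs{g}$ and chaining the three estimates gives \eqref{dtn_error}.

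The main obstacle is justifying this weak definition of the conormal trace. We must check that $\wid{\mathcal{R}}$ as defined in \eqref{dtnmap_pml} coincides with the functional $\bs{g}\mapsto\mathcal{B}_{\Om_{\PML}}(\cdot,\Psi_{\bs{g}})$, independently of the chosen lifting; this holds because the PML equations hold in the weak sense against $H^1_0$ test functions. We must also track that the parameter dependence stays polynomial in $d$ and in $(1+\zeta\al_0)^2+\al_0^2$, including the constant in Theorem \ref{pmllayer}.
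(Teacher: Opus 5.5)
Your proposal is correct and follows essentially the same route as the paper. You write $(\mathcal{N}-\hat{\mathcal{N}})\bs{f}$ as the conormal trace $\wid{\mathcal{R}}$ of the difference between $\wid{E}(\bs{f})$ and the layer solution with data $\bs{f}$ on $\pa B_1$, bound that difference in $H^1(\Om_{\PML})^4$ via Theorem \ref{pmllayer} and Lemma \ref{thermoelasticPMLpotential}, and control the $H^{-1/2}(\pa B_1)^4$ norm by duality with a lifting vanishing on $\pa B_2$ and the continuity of $\mathcal{B}_{\Om_{\PML}}$; your extra care about the boundary-term sign, the $d$-independence of the lifting constant, and the $\al_0$-dependence of the continuity constant only makes explicit what the paper leaves implicit.
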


\begin{proof}
Given $\bs{f}\in H^{1/2}(\pa B_1)^4$, let $\wid{E}(\bs{f})$ be the PML extension potential defined in \eqref{pmlextension}.
From the definition of DtN operator $\mathcal{N}$ in \eqref{dtn}, we know 
\begin{equation*}
	\mathcal{N}\bs{f}=\mathcal{R} E(\bs{f})|_{\pa B_{1}}=\mathcal{R} \wid{E}(\bs{f})|_{\pa B_{1}}.
\end{equation*}
 Now let $\widetilde{U}$ be the solution of system \eqref{dtn_pml1}-\eqref{dtn_pml4} with $G=\bs{f}$ 
 on $\pa B_1$. From the definition of DtN operator $\hat{\mathcal{N}}$ in \eqref{dtnmap_pml}, we know 
\begin{equation*}
	\hat{\mathcal{N}}\bs{f}=\wid{\mathcal{R}}\wid{U}|_{\pa B_1}=\mathcal{R}\wid{U}|_{\pa B_1}.
\end{equation*} 
Then $(\mathcal{N}-\hat{\mathcal{N}})\bs{f}=\mathcal{R}\wid{W}|_{\pa B_1}$, where 
$\wid{W}=\wid{U}-\wid{E}(\bs{f})$ satisfies the system \eqref{PMLlayerstabilityequa1}-\eqref{PMLlayerconditioninner} 
with  $F=-\wid{E}(\bs{f})$ on $\pa B_2$. By Theorem \ref{pmllayer}, the following stability estimate 
\begin{equation}\label{W_stability}
	\Vert \wid{W} \Vert_{H^1(\Om_{\PML})^4}\leq C(d) \Vert \wid{E}(\bs{f})\Vert_{{ H^{1/2}(\pa B_2)}^4}
\end{equation}
therefore holds, where the constant $C(d)$ depends on the thickness $d$ and grows at most polynomially in $d$.

Let $\Psi \in H^{1/2}(\pa B_{1})^{4}$ be extended to a function in $H^{1}(\Om_{\text{PML}})^{4}$ 
(still denoted by $\Psi$) such that $\Psi = 0$ on $\pa B_{2}$ and $\| \Psi \|_{H^{1/2}(\pa B_{1})^{4}} 
\ge C\| \Psi \|_{H^{1}(\Om_{\text{PML}})^{4}}$. Then it follows from the integration by parts and \eqref{W_stability} that
\begin{equation*}
	\begin{aligned}
		\lvert\langle\mathcal{R}\wid{W}|_{\pa B_1},\Psi\rangle_{\pa B_1}\rvert
		&=\lvert \langle(\hat{\mathcal{N}}-\mathcal{N})\bs{f},\Psi\rangle_{\pa B_1}\rvert
		= \lvert \mathcal{B}_{\PML}(\wid{W},\Psi)\rvert\\
		&\leq C\Vert \wid{W} \Vert_{H^1(\Om_{\PML})^4}\Vert \Psi \Vert_{H^1(\Om_{\PML})^4}\\
		&\leq C(d)\Vert \wid{E}(\bs{f})\Vert_{{ H^{1/2}(\pa B_2)}^4}\Vert \Psi\Vert_{{ H^{1/2}(\pa B_1)}^4},
	\end{aligned}
\end{equation*}
where the sesquilinear form $\mathcal{B}_{\PML}(\cdot,\cdot)$ is defined in \eqref{biform_B} 
for $D=\Om_{\PML}$. The it follows from the definition of $H^{-1/2}(\pa B_1)^4$ that 
\begin{equation*}
	\Vert\mathcal{R}\wid{W}|_{\pa B_1}\Vert_{{ H^{-1/2}(\pa B_1)}^4} 
	= \sup\limits_{\Psi \in H^{1/2}(\pa B_{1})^{4}\ba\{0\}}\frac{\lvert
	\langle\mathcal{R}\wid{W}|_{\pa B_1},\Psi\rangle_{\pa B_1}\rvert}
	{\Vert \Psi\Vert_{{ H^{1/2}(\pa B_1)}^4}}\leq C(d)\Vert \wid{E}(\bs{f})\Vert_{{ H^{1/2}(\pa B_2)}^4}.
\end{equation*}
This, combined with Lemma \ref{thermoelasticPMLpotential} arrive at the desired estimate \eqref{dtn_error}.

\end{proof}

Now we can establish the main result on the exponential convergence of the PML method.
\begin{theorem}[Convergence Theorem]\label{thm:convergence}
Let conditions \eqref{pml_pc} be satisfied and $\al_0d$ be sufficiently large. For given 
${\bf Q} \in H^1(\R^3\ba\ov{\Om})^3$ with compact support inside $B_1$, let $U$ and $\wid{U}$ 
be the solutions of the problems \eqref{matrix_eq} with Kupradze radiation conditions \eqref{Kupradze} 
and \eqref{tPML1}-\eqref{tPML4}, respectively. Then we have the following error estimate
\begin{equation}\label{solution_err}
	\Vert \wid{U}-U \Vert_{H^1(\Om_1)^4}\leq C\Big[(1+\zeta \al_0)^2+\al_0^2\Big]^{\frac{m}{2}}P(d)
	e^{-r_0\al_0d}\Vert \wid{U}\Vert_{{H^{1/2}(\pa B_1)^4}},
\end{equation}
where $m>0$ is an integer, $r_0$ appears in \eqref{pmld1} and $P(d)$ is a function that grows at most polynomially.
\begin{proof}
Noting that ${\bf Q} \in H^1(\R^3\ba\ov{\Om})^3$ is compactly supported inside $B_1$, then $U$ and 
$\wid{U}$ satisfy the following variational problems
\begin{align}\label{scattering}
	\mathcal{B}_1(U,\Psi)&=-\int_{\Om_1}{\bf Q}\cdot \ov{\Psi}dx,
	\quad \forall\;\Psi \in H^{1}_{\pa B_1}(\Om_1)^4 \\ \label{pmlproblem}
	\mathcal{B}_{\Om_2}(\wid{U},\Psi)&=-\int_{\Om_1}{\bf Q}\cdot \ov{\Psi}dx,
	\quad \forall\;\Psi \in H_0^1(\Om_2)^4.
\end{align}
Simple decomposition of $\mathcal{B}_{\Om_2}$ and integration by parts imply
\begin{equation}\label{B_decom}
	\begin{aligned}
		\mathcal{B}_{\Om_2}(\wid{U},\Psi)=\;&	\mathcal{B}_1(\wid{U},\Psi)
		+\langle\mathcal{N}\wid{U},\Psi\rangle_{\pa B_1}+\mathcal{B}_{\Om_{\PML}}(\wid{U},\Psi)\\
		=\;&\mathcal{B}_1(\wid{U},\Psi)+\langle\mathcal{N}\wid{U},\Psi\rangle_{\pa B_1}
		+\langle-\hat{\mathcal{N}}\wid{U},\Psi\rangle_{\pa B_1}.
	\end{aligned}		
\end{equation}
Subtracting the equations \eqref{scattering} and \eqref{pmlproblem} and using \eqref{B_decom} gives
\begin{equation*}
	\begin{aligned}
		\mathcal{B}_1(U-\wid{U},\Psi)=\langle(\mathcal{N}-\hat{\mathcal{N}})\wid{U},\Psi\rangle_{\pa B_1}.
	\end{aligned}	
\end{equation*}
This, combined with the trace theorem gives
\begin{equation*}
	\begin{aligned}
		\lvert \mathcal{B}_1(U-\wid{U},\Psi)\rvert &\leq C\Vert (\mathcal{N}-\hat{\mathcal{N}})\wid{U}
		\Vert_{ H^{-1/2}(\pa B_1)^4}\Vert\Psi \Vert_{H^{1}(\Om_1)^4},
	\end{aligned}		
\end{equation*}
which obviously implies
\begin{equation}\label{eq:label}
	\frac{\lvert \mathcal{B}_1(U-\wid{U},\Psi)\rvert}{\Vert\Psi \Vert_{H^{1}(\Om_1)^4}}
	\leq C\Vert (\mathcal{N}-\hat{\mathcal{N}})\wid{U}\Vert_{ H^{-1/2}(\pa B_1)^4}.
\end{equation}
The desired error estimate \eqref{solution_err} follows from Lemma \ref{thermoleasticDtNerror} 
and the inf-sup condition \eqref{infsup_B1} for the sesquilinear form $ \mathcal{B}_1$.
	
\end{proof}

\begin{remark}[Uniform boundedness of $\wid{U}$ with respect to large PML parameters]
{\rm Since $\wid{U}$ is the solution of the truncated PML problem \eqref{tPML1}-\eqref{tPML4},
$\|\wid{U}\|_{H^{1/2}(\pa B_1)^4}$ naturally depends on the PML parameters $d$ and $\alpha_0$. 
A natural question is: how does $\|\wid{U}\|_{H^{1/2}(\pa B_1)^4}$ grow as $d$ and $\alpha_0$ vary? 
We discuss this problem below under the conditions of Theorem \ref{thm:convergence}.

From the triangle inequality
\begin{equation*}
\|\wid{U}\|_{H^1(\Om_1)^4} - \|U\|_{H^1(\Om_1)^4} \leq \|\wid{U} - U\|_{H^1(\Om_1)^4},
\end{equation*}  
and trace theorem
\begin{equation*}
\frac{1}{C_{\Om_1}} \|\wid{U}\|_{H^{1/2}(\pa B_1)^4} \leq \|\wid{U}\|_{H^1(\Om_1)^4},
\end{equation*}  
where $C_{\Om_1}$ is a positive constant independent of $d$ and $\al_0$. 
It then follows from the error estimate \eqref{solution_err} that  
\begin{equation*}
\frac{1}{C_{\Om_1}} \|\wid{U}\|_{H^{1/2}(\pa B_1)^4} - C\left[(1 + \zeta \al_0)^2 
+ \al_0^2\right]^{\frac{m}{2}} P(d)e^{-r_0\al_0 d} \|\wid{U}\|_{H^{1/2}(\pa B_1)^4} \leq \|U\|_{H^1(\Om_1)^4},
\end{equation*}  
where $m$ is a positive integer. It can be observed that when $\al_0 d$ is sufficiently large, 
$\|\wid{U}\|_{H^{1/2}(\pa B_1)^4}$ remains uniformly bounded.}
\end{remark}

\end{theorem}

\section{Conclusion}
\setcounter{equation}{0}
In this paper, we studied the uniaxial PML method in the Cartesian coordinates for 3D time-harmonic 
thermoelastic scattering problems. Under certain constraints on model parameters, we showed the existence 
of unique solution except possibly for a discrete set of frequencies, for the PML problems both in the 
truncated domain and PML layer by using the classical analytic Fredholm theory. Furthermore, the exponential 
convergence of the uniaxial PML method was established in terms of the thickness and absorbing parameters of 
the PML layer. The proof was based on the error estimate between the Dirichlet-to-Neumann (DtN) operators for
the original scattering problem and the truncated PML problem, which depends on the exponential decay properties
for the PML extensions and modified fundamental solutions.

Our method can be extended to other scattering problems such as the scattering by waveguide structures
or poroelastic obstacles. It is also interesting to study the convergence of proposed uniaxial PML method 
with distinct boundary conditions at the outer PML interface, and spherical PML method for time-harmonic 
thermoelastic scattering problems. We hope to report such results in the near future.

\section*{Acknowledgements}

This work is partly supported by the NNSF of China grants 12431016 and 12201033.


\begin{thebibliography}{99}

\bibitem{ASZ2024} E.S. Athanasiadou, V. Sevroglou and S. Zoi, Thermoelastic wave scattering by a multi-layered object, 
{\em Comput. Math. Appl. \bf 163} (2024), 186-200.

\bibitem{BW2005} G. Bao and H. Wu, Convergence analysis of the perfectly matched layer problems for time-harmonic 
Maxwell's equations, {\em SIAM J. Numer. Anal. \bf 43} (2005), 2121-2143.

\bibitem{BXY2019} G. Bao, L. Xu and T. Yin, Boundary integral equation methods for the elastic and thermoelastic waves 
in three dimensions, {\em Comput. Methods Appl. Mech. Engrg. \bf 354} (2019), 464-486.

\bibitem{Berenger1994} J.P. B\'erenger, A perfectly matched layer for the absorption of electromagnetic waves, 
{\em J. Comput. Phys. \bf 114} (1994), 185-200.

\bibitem{BP2012} J.H. Bramble and J.E. Pasciak, Analysis of a Cartesian PML approximation to the three dimensional 
electromagnetic wave scattering problem, {\em Int. J. Numer. Anal. Model. \bf 9} (2012), 543-561.

\bibitem{BP2013} J. H. Bramble and J.E. Pasciak, Analysis of a Cartesian PML approximation to acoustic scattering 
problems in $\R^2$ and $\R^3$, {\em J. Comput. Appl. Math. \bf 247} (2013), 209-230.

\bibitem{Bramble2007} J.H. Bramble and J.E. Pasciak, Analysis of a finite PML approximation for the three
dimensional time-harmonic Maxwell and acoustic scattering problems, {\em Math. Comp. \bf76} (2007), 597-614.

\bibitem{BPT2010} J.H. Bramble, J.E. Pasciak and D. Trenev, Analysis of a finite PML approximation to the three 
dimensional elastic wave scattering problem, {\em Math. Comp. \bf 79} (2010), 2079-2101.

\bibitem{Cakoni2000} F. Cakoni, Boundary integral method for thermoelastic screen scattering problem in $\R^3$, 
{\em Math. Meth. Appl. Sci. \bf 23} (2000), 441-466.

\bibitem{Cakoni1999} F. Cakoni and G. Dassios, The Atkinson-Wilcox theorem in thermoelasticity, 
{\em Q. Appl. Math. \bf 57} (1999), 771-795.

\bibitem{Chen2009} Z. Chen, Convergence of the time-domain perfectly matched layer method for acoustic 
scattering problems, {\em Int. J. Numer. Anal. Model. \bf 6} (2009), 124-146.

\bibitem{CCZ2013} Z. Chen, T. Cui and L. Zhang, An adaptive anisotropic perfectly matched layer method for 
3-D time harmonic electromagnetic scattering problems, {\em Numer. Math. \bf 125} (2013), 639-677.

\bibitem{CL2005} Z. Chen and X. Liu, An adaptive perfectly matched layer technique for time-harmonic 
scattering problems, {\em SIAM J. Numer. Anal. \bf 43} (2005), 645-671.

\bibitem{CW2008} Z. Chen and X. Wu, An adaptive uniaxial perfectly matched layer method for time-harmonic 
scattering problems, {\em Numer. Math. Theor. Meth. Appl. \bf 1} (2008), 113-137.

\bibitem{CW2012} Z. Chen and X. Wu, Long-time stability and convergence of the uniaxial perfectly matched 
layer method for time-domain acoustic scattering problems, 
{\em SIAM J. Numer. Anal. \bf 50} (2012), 2632-2655.

\bibitem{CXZ2016} Z. Chen, X. Xiang and X. Zhang, Convergence of the PML method for elastic wave scattering 
problems, {\em Math. Comp. \bf 85} (2016), 2687-2714.

\bibitem{CZ2010} Z. Chen and W. Zheng, Convergence of the uniaxial perfectly matched layer method for 
time-harmonic scattering problems in two-layered media, 
{\em SIAM J. Numer. Anal. \bf 48} (2010), 2158-2185.

\bibitem{CZ2017} Z. Chen and W. Zheng, PML method for electromagnetic scattering problem in a two-layer 
medium, {\em SIAM J. Numer. Anal. \bf 55} (2017), 2050-2084.

\bibitem{Chew1994} W.C. Chew and W.H. Weedon, A 3D perfectly matched medium from modified Maxwell's equations 
with stretched coordinates, {\em Microw. Opt. Technol. Lett. \bf 7} (1994), 599-604.

\bibitem{Fathi2015} A. Fathi, L.F. Kallivokas and B. Poursartip, Full-waveform inversion in three-dimensional 
PML-truncated elastic media, {\em Comput. Meth. Appl. Mech. Eng. \bf 296} (2015), 39-72.

\bibitem{HSZ2003} T. Hohage, F. Schmidt and L. Zschiedrich, Solving time-harmonic scattering problems based 
on the pole condition II: convergence of the PML method, {\em SIAM J. Math. Anal. \bf 35} (2003), 547-560.

\bibitem{Hou2022} W. Hou, L.Y. Fu and J.M. Carcione, Reflection and transmission of thermoelastic waves in 
multilayered media, {\em Geophysics \bf 87} (2022), MR117-MR128.

\bibitem{Hou2021} W. Hou, L.Y Fu, J.M. Carcione and Z. Wang, Simulation of thermoelastic waves based on the 
Lord-Shulman theory, {\em Geophysics \bf 86} (2021), T155-T164.

\bibitem{Jiang2017} X. Jiang, P. Li, J. Lv and W. Zheng, An adaptive finite element PML method for the elastic 
wave scattering problem in periodic structures, {\em ESAIM: Math. Model. Numer. Anal. \bf 51} (2017), 2017-2047.

\bibitem{Jiang2018} X. Jiang, P. Li, J. Lv and W. Zheng, Convergence of the PML solution for elastic wave 
scattering by biperiodic structures, {\em Commun. Math. Sci. \bf 16} (2018), 987-1016.

\bibitem{Kim2010} S. Kim and J.E. Pasciak, Analysis of a Cartesian PML approximation to acoustic scattering 
problems in $\R^2$, {\em J. Math. Anal. Appl. \bf 370} (2010), 168-186.

\bibitem{Kupradze1979} V.D. Kupradze, T.G. Gegelia, M.O. Basheleishvili and T.V. Burchuladze,
{\em Three-dimensional Problems of the Mathematical Theory of Elasticity and Thermoelasticity}, 
North-Holland, Amsterdam, 1979.

\bibitem{Lassas1998} M. Lassas and E. Somersalo, On the existence and convergence of the solution of PML 
equations, {\em Computing \bf 60} (1998), 229-241.

\bibitem{Lei2023} W. Lei, Y. Liu, G. Li, S. Zhu, G. Chen and C. Li, 2D frequency-domain finite-difference 
acoustic wave modeling using optimized perfectly matched layers, 
{\em Comput. Meth. Appl. Mech. Eng. \bf 88} (2023), F1-F13.

\bibitem{Pakravan2014} A. Pakravan, J. W. Kang, A Gauss-Newton full-waveform inversion for material profile 
reconstruction in 1D PML-truncated solid media, {\em KSCE J. Civ. Eng. \bf 18} (2014), 1792-1804.

\bibitem{Pakravan2017} A. Pakravan, J. W. Kang and C. M. Newtson, A Gauss-Newton full-waveform inversion in 
PML-truncated domains using scalar probing waves, {\em J. Comput. Phys. \bf 350} (2017), 824-846.

\bibitem{Savare1998} G. Savare, Regularity results for elliptic equations in Lipschitz domains, 
{\em J. Funct. Anal. \bf 152} (1998),  176-201.

\bibitem{Wang2022} E. Wang, J.M. Carcione and J. Ba, Wave simulation in partially saturated porothermoelastic 
media, {\em IEEE Trans. Geosci. Remote Sensing \bf 60} (2022), 1-14.

\bibitem{wang2025} Y. Wang, P. Li, L. Xu and T. Yin, An adaptive Dirichlet-to-Neumann finite element method 
for the thermoelastic scattering problem, {\em J. Comput. Phys. \bf 534} (2025), 114016.

\bibitem{wyz2020} C. Wei, J. Yang and B. Zhang, Convergence analysis of the PML method for time-domain 
electromagnetic scattering problems, {\em SIAM J. Numer. Anal. \bf 58} (2020), 1918-1940.

\bibitem{wyz2021} C. Wei, J. Yang and B. Zhang, Convergence of the uniaxial PML method for time-domain 
electromagnetic scattering problems, {\em ESAIM: Math. Model. Numer. Anal. \bf 55} (2021), 2421-2443.

\bibitem{Yang2024} S. Yang, G. Wu, J. Shan and H. Liu, Simulation of seismic waves in fluid-solid coupled 
thermoelastic media, {\em Geophysics \bf 89} (2024), T263-T274.

\bibitem{Zhu2024} T. Zhu, C. Wei, and J. Yang, The time-harmonic electromagnetic wave scattering by a 
biperiodic elastic body, {\em Math. Meth. Appl. Sci. \bf 47} (2024), 6354-6381.

\end{thebibliography}
\end{document}